\documentclass[12pt,a4paper,final]{amsart}
\usepackage{amssymb}
\usepackage{hyperref}
\usepackage[notcite, notref]{showkeys}
\usepackage[utf8]{inputenc}



\setlength{\voffset}{-1in}
\setlength{\topmargin}{1.5cm}
\setlength{\hoffset}{-1in}
\setlength{\oddsidemargin}{2cm}
\setlength{\evensidemargin}{2cm}
\setlength{\textwidth}{17cm}
\setlength{\textheight}{24.5cm}
\setlength{\footskip}{0.5cm}

\theoremstyle{plain}
\newtheorem{theorem}{Theorem}[section]
\newtheorem{proposition}[theorem]{Proposition}

\newtheorem{lemma}[theorem]{Lemma}

\theoremstyle{definition}

\newtheorem{remark}[theorem]{Remark}
\newtheorem{question}[theorem]{Question}

\newtheorem{notation}[theorem]{Notation}

\theoremstyle{remark}

\numberwithin{equation}{section}

\newcommand{\N}{\mathbb N}
\newcommand{\Z}{\mathbb Z}

\newcommand{\R}{\mathbb R}


\DeclareMathOperator{\GL}{GL}

\DeclareMathOperator{\Ot}{O}
\DeclareMathOperator{\SO}{SO}
\DeclareMathOperator{\SU}{SU}

\DeclareMathOperator{\Spin}{Spin}

\newcommand{\op}{\operatorname}
\newcommand{\Id}{\textup{I}}
\newcommand{\id}{\textup{id}}

\newcommand{\mi}{\mathrm{i}}

\DeclareMathOperator{\diag}{diag}

\DeclareMathOperator{\Spec}{Spec}
\DeclareMathOperator{\spec}{spec}

\DeclareMathOperator{\vol}{vol}

\newcommand{\orders}{\textup{Ord}}

\setcounter{tocdepth}{1}

\title[Isospectral spherical space forms and orbifolds]{Isospectral spherical space forms and orbifolds of highest volume}

\author{Alfredo Álzaga}
\address[Alfredo Álzaga]{Departamento de Matemática, Universidad Nacional del Sur (UNS), Bahía Blanca, Argentina}
\email{aalzaga@uns.edu.ar}

\author{Emilio~A.~Lauret}
\address[Emilio~A.~Lauret]{Instituto de Matemática (INMABB), Departamento de Matemática, Universidad Nacional del Sur (UNS)-CONICET, Bahía Blanca, Argentina.}
\email{emilio.lauret@uns.edu.ar}

\subjclass[2020]{58J53}
\keywords{Lens space, lens orbifold, spherical space form, spherical orbifold, spectrum, Laplace operator}
\thanks{The second author was supported by grants from FONCyT (PICT-2019-01054), SGCYT--UNS (24/L117 y 24/L126), and CONICET (PIP 11220210100343CO)}
\date{\today}

\begin{document}

\begin{abstract}
We prove that $\operatorname{vol}(S^{d})/8$ is the highest volume of a pair of $d$-dimensional isospectral and non-isometric spherical orbifolds for any $d\geq5$. 
Furthermore, we show that
$\operatorname{vol}(S^{2n-1})/11$ is the highest volume of a pair of $(2n-1)$-dimensional isospectral and non-isometric spherical space forms
if either 
$n\geq11$ and $n\equiv 1\pmod 5$, 
or $n\geq7$ and $n\equiv 2\pmod 5$, 
or $n\geq3$ and $n\equiv 3\pmod 5$. 
\end{abstract} 

\maketitle

\tableofcontents
	
\section{Introduction}

Akira Ikeda constructed in 1980 (\cite{Ikeda80_isosp-lens}) early examples of isospectral Riemannian manifolds, namely, families of increasing size of lens spaces (odd-dimensional Riemannian manifolds with constant sectional curvature and cyclic fundamental groups) whose naturally associated Laplace--Beltrami operators have the same spectra (collection of eigenvalues counted with multiplicities). 
Only isospectral flat tori (found firstly by Milnor in the one-page article \cite{Milnor64}) and isospectral Riemann surfaces constructed by Vign\'{e}ras~\cite{Vigneras80} were previously known. 

At that time, there was a big interest in finding similar examples with `minimal properties'.
We next list some instances. 

After the pair of $16$-dimensional flat tori found by Milnor~\cite{Milnor64}, several examples with less dimension have since emerged. 
See \cite[page xxix]{ConwaySloane-book} for a brief tour of the advances, or \cite{RowlettNilssonRydell-flattori} for an enjoyable narration of the developments. 
The final solution was given by Schiemann, finding in \cite{Schiemann90} many pairs in dimension $4$ (with the help of a computer, in 1990!) and proving in \cite{Schiemann97} that there are no examples in dimension $3$. 

Similarly, Ikeda proved that the pair of $5$-dimensional isospectral lens spaces from \cite{Ikeda80_isosp-lens} has minimal dimension among spherical space forms (Riemannian manifolds of positive constant sectional curvature). 
Indeed, on the one hand, it is obvious that there are no even-dimensional or either $1$-dimensional isospectral examples; on the other hand, he proved in \cite{Ikeda80_3-dimI} that there are no isospectral examples in dimension $3$.

As a third instance, the genus of Vign\'{e}ras' examples were enormous. 
Buser~\cite{Buser-book}, using the powerful Sunada's method \cite{Sunada85}, constructed isospectral  Riemann surfaces of genus $5$ and any $g\geq7$. 
Examples with genus $4$ and $6$ were later constructed by Brooks and Tse~\cite{BrooksTse87}. 
Currently, there are no known examples of non-isometric isospectral Riemann surfaces with genus $2$ or $3$.

Interestingly, for any fixed dimension $d\geq2$, the volume of hyperbolic (i.e., constant sectional curvature $-1$) $d$-manifolds or $d$-orbifolds are uniformly bounded by below by a positive real number. 
The problem of determining the hyperbolic manifold or orbifold of lowest volume in a fixed dimension is open; see \cite{BelolipetskyICM} and the references therein for further details. 

Linowitz and Voight~\cite{LinowitzVoight15} studied the problem of determining the lowest area and volume of (strongly) isospectral and non-isometric hyperbolic orbifolds and manifolds of dimensions 2 and 3 within certain nice classes of arithmetic groups. 
Notice there is no analogous problem in the flat case (constant sectional curvature $0$) because scaling a single pair of isospectral compact flat manifolds provides an isospectral pair of any given volume. 

The main goal of this article is to study the analogous problem in the spherical case, that is, among compact Riemannian manifolds with constant sectional curvature $1$. 
Spectral problems for this kind of manifolds have received some attention in the last years (see \cite{LMR-SaoPaulo} and the references therein for details). 

A \emph{spherical space form} is a Riemannian manifold with constant sectional curvature $1$. 
Any of them can be obtained as a quotient $S^d/\Gamma$ of the $d$-dimensional unit sphere $S^d$ by a discrete subgroup $\Gamma$ of the isometry group $\op{Iso}(S^d)\simeq\Ot(d+1)$ acting freely on $S^d$. 
Its fundamental group is isomorphic to $\Gamma$. 
When $\Gamma$ is cyclic and $d$ is odd, $S^d/\Gamma$ is (isometric to) a \emph{lens space} (see Subsection~\ref{subsec:lens} for further details). 

More generally, any \emph{spherical orbifold} is (in this article) of the form $S^d/\Gamma$ with $\Gamma$ a discrete subgroup of $\Ot(d+1)$ that may have fixed points in $S^d$. 
When $\Gamma$ is a cyclic subgroup of $\SO(d+1)$, $S^d/\Gamma$ is (isometric to) a \emph{lens orbifold} (see Subsection~\ref{subsec:lens} for further details). 

The volume of a spherical orbifold $S^d/\Gamma$ is given by 
\begin{equation}\label{eq:vol}
\vol(S^d/\Gamma)=\frac{\vol(S^d)}{|\Gamma|},
\end{equation}
where $|\Gamma|$ stands for the number of elements in $\Gamma$, which is necessarily finite since $\Gamma$ is a discrete subgroup of a compact Lie group. 
Obviously, the volume of a $d$-dimensional spherical orbifold is bounded by above by $\vol(S^d)$. 
Therefore, the spherical analog of the problem of finding the isospectral hyperbolic orbifolds of smallest volume is to find the isospectral spherical orbifolds of largest volume.

\begin{question}\label{question:highestvolume-orbifold}
What is the largest volume of an isospectral pair of $d$-dimensional non-isometric spherical orbifolds?
\end{question}

Linowitz and the second author observed in \cite[Thm.~1.5]{LauretLinowitz} that a pair of almost conjugate subgroups in $\SO(6)$ constructed by Rossetti, Schueth and Weilandt~\cite{RossettiSchuethWeilandt08} provides, for any $d\geq5$, a pair of isospectral and non-isometric spherical orbifolds $S^d/\Gamma$ and $S^d/\Gamma'$   with $\Gamma\simeq\Gamma'\simeq (\Z/2\Z)^3$, thus $|\Gamma|=|\Gamma'|=8$.
This forces for any $d\geq5$ that $\vol(S^d)/8$ is a lower bound for the highest volume requested in Question~\ref{question:highestvolume-orbifold}.  
The first main result in this article confirms that such example realizes the maximum volume when $d\geq 5$. 

\begin{theorem}\label{thm-main:orbifolds}
For any $d\geq5$, the highest volume of an isospectral pair of $d$-dimensional non-isometric spherical orbifolds is $\vol(S^d)/8$. 
\end{theorem}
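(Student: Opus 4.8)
The plan is to reformulate the statement in terms of the order $m=|\Gamma|$ and a single rational generating function, and then to run a short case analysis over the few isomorphism types of groups of order at most $7$. First I would record the two standard dictionaries: by \eqref{eq:vol} the volume, hence $m$, is a spectral invariant, so an isospectral pair satisfies $|\Gamma|=|\Gamma'|=m$; and $S^d/\Gamma\cong S^d/\Gamma'$ precisely when $\Gamma$ and $\Gamma'$ are conjugate in $\Ot(d+1)$. Since the pair recalled before the statement already realizes $\vol(S^d)/8$, and $m=1$ is trivial, it suffices to prove that for each $m$ with $2\le m\le7$ any two isospectral subgroups $\Gamma,\Gamma'\subset\Ot(d+1)$ of order $m$ are conjugate. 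I would encode the spectrum by Ikeda's identity $\sum_{k\ge0}(\dim V_k^\Gamma)\,z^k=\frac1m\sum_{\gamma\in\Gamma}(1-z^2)\det(I-z\gamma)^{-1}$; cancelling the common factor $(1-z^2)/m$, isospectrality becomes the equality of rational functions $F_\Gamma:=\sum_{\gamma\in\Gamma}\det(I-z\gamma)^{-1}=F_{\Gamma'}$.

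Each summand factors as $\prod_\mu(1-\mu z)^{-1}$ over the eigenvalues $\mu$ of $\gamma$, which are roots of unity; thus $F_\Gamma$ has poles only at roots of unity, and the pole of a single summand at $z=\zeta^{-1}$ has order equal to the multiplicity of the eigenvalue $\zeta$ of $\gamma$. The first use is structural: by isolating, at each primitive root of unity, the element(s) of maximal eigenvalue-multiplicity and checking that the top-order principal part does not cancel (often automatic, as only one element contributes to a given pole), I would read off from $F_\Gamma$ which roots of unity occur as eigenvalues, hence the set of element orders of $\Gamma$. For $m\le7$ this set determines the isomorphism type: a primitive fourth (resp.\ sixth) root of unity separates $\Z/4\Z$ from $(\Z/2\Z)^2$ (resp.\ $\Z/6\Z$ from $S_3$), and every prime order forces a unique cyclic group. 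Hence isospectral subgroups are abstractly isomorphic, and I may argue type by type.

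For the non-cyclic types the real $\Gamma$-irreducible decomposition of $\R^{d+1}$ finishes the job directly. For $(\Z/2\Z)^2$, writing the character multiplicities as $n_{00}=p,n_{10}=q,n_{01}=s,n_{11}=t$, the three involutions have $(-1)$-eigenspaces of dimensions $\{q+t,s+t,q+s\}$, a multiset from which $\{q,s,t\}$ is recovered; modulo $\Aut((\Z/2\Z)^2)=S_3$ this multiset is exactly the conjugacy invariant, and it is read off from the principal parts of $F_\Gamma$ at $z=\pm1$. For $S_3$, writing $\R^{d+1}=\mathrm{triv}^{\,n_0}\oplus\mathrm{sgn}^{\,n_1}\oplus\mathrm{std}^{\,n_2}$, the three conjugate involutions contribute a term with a pole of order $n_1+n_2$ at $z=-1$, and the two conjugate $3$-cycles a term with $n_2$ rotation pairs of angle $2\pi/3$; matching principal parts at $z=-1$, $z=1$ and at a primitive cube root recovers $(n_0,n_1,n_2)$, again the full conjugacy invariant. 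In both cases distinct invariants yield distinct $F_\Gamma$ because the relevant terms have distinct pole loci and orders, so the multiplicity data determines $F_\Gamma$ injectively.

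The cyclic cases $m\in\{3,4,5,6,7\}$ ($m=2$ being immediate) are the genuine obstacle: here $S^d/\Gamma$ is a lens orbifold, two generators give conjugate groups iff their rotation data agree up to the $(\Z/m\Z)^\times$-action (choice of generator) and complex conjugation, and one must show that at these small orders isospectrality forces this, i.e.\ that no Ikeda-type coincidence of the lens generating functions occurs below order $8$, the value at which the first coincidence, in the non-cyclic $(\Z/2\Z)^3$, appears. After matching the $z=1$ pole to equate the fixed-space dimension, the problem reduces to a polynomial identity among the rotation factors $P_a(z)=1-2\cos(2\pi a/m)z+z^2$, and the crux is that distinct rotation multisets are separated already by the value at $z=1$: the resulting symmetric expressions in the positive numbers $P_a(1)$ are strictly monotone under spreading of the multiplicity partition (a Schur-convexity estimate), so they cannot coincide for distinct conjugacy invariants. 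I expect the main difficulty to lie precisely in this last step, carried out uniformly in $d$; the fixed-space dimension enters every term only through a common power of $(1-z)^{-1}$, which I would factor out so that the comparison is independent of $d\ge5$, the dimension bound serving only to guarantee enough room to realize the small embeddings.
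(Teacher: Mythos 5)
Your overall architecture coincides with the paper's: reduce to showing that isospectral subgroups of order at most $7$ are conjugate in $\Ot(d+1)$, detect the isomorphism type from poles of $F_\Gamma$ at primitive roots of unity, handle $(\Z/2\Z)^2$ and $\mathbb S_3$ via character multiplicities read off from principal parts, and treat cyclic $\Gamma$ as a lens orbifold after factoring out the power of $(1-z)^{-1}$ coming from the fixed subspace. All of that is sound and matches Sections 2 and 4 of the paper. The genuine gap is at the step you yourself call the crux: the claim that, for cyclic $\Gamma$, distinct rotation multisets are already separated by the value at $z=1$ via a Schur-convexity estimate. This is \emph{false} for order $7$. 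Write $r_j=2-2\cos(2\pi j/7)$, $j=1,2,3$; these are the roots of $r^3-7r^2+14r-7$, so $e_1=7$, $e_2=14$, $e_3=7$, and a short computation with power sums and the discriminant (which equals $49$) gives
\begin{equation*}
r_1^3r_2+r_2^3r_3+r_3^3r_1 \;=\; 98 \;=\; r_1^2r_2^2+r_2^2r_3^2+r_3^2r_1^2,
\qquad
r_1r_2^3+r_2r_3^3+r_3r_1^3 \;=\; 147 .
\end{equation*}
Consequently the non-isometric $15$-dimensional lens spaces $L(7;1^4,2^1,3^3)$ and $L(7;1^4,2^2,3^2)$ (equivalently, two non-conjugate cyclic subgroups of order $7$, both with maximal multiplicity $n_1=4$ and $n=8$) have \emph{identical} leading Laurent coefficients at $z=1$ once the identity term is removed: both equal $2\cdot 7^{-4}\cdot 98$. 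So the $z=1$ data you propose to use cannot distinguish them, and your argument cannot close the case $m=7$ (it does work for $m\le 6$, e.g.\ for $m=5$ the analogous quantity $t^a+t^{-a}+2$ with $t=r_1/r_2$ is strictly monotone).

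The conceptual reason the heuristic breaks is that the spectral quantities are invariant only under the Galois action on the three pairs of primitive $7$th roots, which is cyclic of order $3$, not the full symmetric group: the resulting expressions are cyclic, not symmetric, functions of the multiplicities (note $98\neq147$ above for the two cyclic orderings of the same multiset $\{4,1,3\}$), so majorization/Schur-convexity does not apply, and exact coincidences do occur. The paper gets around precisely this obstacle in its Proposition on $\varphi(q)\le 6$ by a three-part argument: it pins down $n_1$ using Ikeda's maximal-multiplicity invariant, uses the evaluation at $z=1$ \emph{only} to distinguish the shape of the partition ($n_1$ strictly maximal, versus $n_1=n_2>n_3$, versus all equal), and then, within the strict-maximum shape, evaluates the polynomial $P_\Gamma$ at $z=\xi_7$ itself, where two of the three cyclic terms vanish because they carry factors $(z-\xi_7)$ to positive powers; the single surviving product is strictly monotone in $n_2$, hence injective. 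Your proof needs this evaluation at $\xi_7$ (or some substitute invariant beyond $z=1$) to be complete; as written, the final step fails exactly where the real difficulty lies.
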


The proof shows that if $S^d/\Gamma$ and $S^d/\Gamma'$ are isospectral spherical orbifolds with $|\Gamma|=|\Gamma'|\leq 7$, then $\Gamma$ and $\Gamma'$ are conjugate in $\Ot(d+1)$, and consequently $S^d/\Gamma$ and $S^d/\Gamma'$ are isometric.

It is worth mentioning that Theorem~\ref{thm-main:orbifolds} is \emph{currently optimal} in the sense that it is not known so far whether there exist isospectral and non-isometric spherical orbifolds of dimension $d\leq 4$. 
Actually, Bari and Hunsicker~\cite{ShamsHunsicker17} established the non-existence for lens orbifolds. 
Furthermore, V\'{a}squez~\cite[Prop.~2.4]{Vasquez18} proved that there are no strongly isospectral spherical orbifolds, which is equivalent by \cite{Pesce95} to the following fact: 
two almost conjugate subgroups of the double cover $\Spin(4)\simeq \SU(2)\times\SU(2)$ of $\SO(4)$ are necessarily conjugate.

We next move to the manifold case.

\begin{question}\label{question:highestvolume-manifold}
What is the largest volume of an isospectral pair of $d$-dimensional non-isometric spherical space forms?
\end{question}

The condition for a finite subgroup $\Gamma\subset \Ot(d+1)$ acting without fixed points in $S^d$ is a great obstruction. 
Indeed, the classification of spherical space forms, finished by Wolf in \cite{Wolf-book}, is quite technical and difficult to state. 

If $d$ is even, the only $d$-dimensional spherical space forms are $S^d$ and $P^d(\R)$, which are not isospectral. 
Furthermore, it was shown in \cite{Ikeda80_isosp-lens} that there are no $3$-dimensional isospectral and non-isometric spherical space forms.
Therefore, Question~\ref{question:highestvolume-manifold} makes sense only for $d\geq5$ odd. 

It was conjectured in \cite{LauretLinowitz} that isospectral and non-isometric pairs of spherical space forms of highest volume are necessarily lens spaces. 
The conjecture was confirmed (\cite[Thm.~1.7]{LauretLinowitz}) for any $d=2n-1$ with $n\equiv 1\pmod 4$, or $n\equiv 1,2,3\pmod 5$, or $n\equiv 1,2,3,4\pmod 6$, or $n\equiv 2,3,4,5,6\pmod 8$, or $n\equiv 2,3,4,5,6,7\pmod 9$, or $n\equiv 2,3,4,5,6,7,8,9\pmod{11}$. 
For instance, the only possible exceptions for $n\in\N$ with $3\leq n\leq 1000$ are $144$ and $935$.

The strategy was as follows. 
On the one hand, \cite[Lem.~4.4]{LauretLinowitz} implies the following:  
\begin{quote}
\it
if $S^d/\Gamma$ and $S^d/\Gamma'$ are isospectral and non-isometric spherical space forms with $d\geq5$ odd, then $|\Gamma|=|\Gamma'|\geq 24$ if either $\Gamma$ or $\Gamma'$ is non-cyclic.
\end{quote} 
Consequently, to establish the conjecture for a single dimension $d$, it is sufficient to find a pair of $d$-dimensional isospectral and non-isometric lens spaces $S^d/\Gamma$ and $S^d/\Gamma'$ with $|\Gamma|=|\Gamma'|< 24$. 

On the other hand, the following result allowed the authors to find the required isospectral lens spaces in high dimensions from those in small dimensions found with the help of a computer:

\begin{theorem} \cite[Thm.~4.6]{LauretLinowitz} \label{thm:crece-dimension}
If there is a pair of $(2n-1)$-dimensional isospectral and non-isometric lens spaces with fundamental groups of order $q$, then, for any positive integer $r$, there are $\big(2n+r\varphi(q)-1\big)$-dimensional isospectral and non-isometric lens spaces with fundamental groups of order $q$. 
\end{theorem}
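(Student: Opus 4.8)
The plan is to reduce the statement to a generating-function criterion for isospectrality together with the classical isometry criterion for lens spaces, and then to append the same carefully chosen block of coordinates to both given spaces. Write $L(q;s_1,\dots,s_n)=S^{2n-1}/\Gamma$, where $\Gamma$ is cyclic of order $q$ generated by the block-diagonal rotation with angles $2\pi s_j/q$ and each $s_j$ is coprime to $q$ (so that the action is free). Fix a primitive $q$-th root of unity $\zeta$. Expressing the eigenvalue multiplicities as averages of the $\SO(2n)$-characters of the spherical harmonics over $\Gamma$ and cancelling the common factor, Ikeda's theorem states that $L(q;s_1,\dots,s_n)$ and $L(q;t_1,\dots,t_n)$ are isospectral if and only if, as rational functions of $z$,
\begin{equation}\label{eq:ikeda}
\sum_{k=1}^{q-1}\prod_{j=1}^{n}\frac{1}{(1-z\zeta^{ks_j})(1-z\zeta^{-ks_j})}
=\sum_{k=1}^{q-1}\prod_{j=1}^{n}\frac{1}{(1-z\zeta^{kt_j})(1-z\zeta^{-kt_j})} ;
\end{equation}
denote by $B_k^{(s)}$ and $B_k^{(t)}$ the $k$-th summands on the two sides. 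By the classical isometry criterion for lens spaces, $L(q;a_1,\dots,a_{n'})$ and $L(q;b_1,\dots,b_{n'})$ are isometric if and only if there is a unit $c\in(\Z/q\Z)^{\times}$ with $\{\pm c\,a_i\}=\{\pm b_i\}$ as multisets modulo $q$. Now fix a set $\{u_1,\dots,u_{\varphi(q)/2}\}$ of representatives of $(\Z/q\Z)^{\times}/\{\pm1\}$ (note $\varphi(q)$ is even, since a pair with group order $q$ forces $q\geq 3$), and define the new spaces by appending $r$ copies of this block to each of the two given tuples. All appended coordinates are units, so the new spaces are manifolds, their fundamental groups are still cyclic of order $q$, and the number of appended coordinates is $r\varphi(q)/2$, giving dimension $2n+r\varphi(q)-1$.

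For the isospectrality of the enlarged pair I would write criterion \eqref{eq:ikeda} for the new tuples, which amounts to $\sum_{k}A_k\big(B_k^{(s)}-B_k^{(t)}\big)=0$, where the factor coming from the appended block is
\[
A_k=\left(\prod_{i=1}^{\varphi(q)/2}(1-z\zeta^{ku_i})(1-z\zeta^{-ku_i})\right)^{-r}
=\left(\prod_{a\in(\Z/q\Z)^{\times}}(1-z\zeta^{ka})\right)^{-r},
\]
the second equality holding because $\{u_i\}\cup\{-u_i\}$ runs exactly once over the units. The key observation is that $A_k$ depends on $k$ only through $e=\gcd(k,q)$: writing $k=e\ell$ with $\ell$ coprime to $q/e$, the reduction $(\Z/q\Z)^{\times}\to(\Z/(q/e)\Z)^{\times}$ is surjective with fibres of constant size, so $\prod_{a}(1-z\zeta^{ka})$ equals a fixed power of $\prod_{b\in(\Z/(q/e)\Z)^{\times}}(1-z\,\omega^{b})$ with $\omega$ a primitive $(q/e)$-th root of unity, independently of $\ell$. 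Hence $A_k=A^{(e)}$ whenever $\gcd(k,q)=e$, and the new isospectrality is equivalent to
\[
\sum_{e\mid q,\,e<q}A^{(e)}\sum_{\gcd(k,q)=e}\big(B_k^{(s)}-B_k^{(t)}\big)=0 .
\]
It therefore suffices to establish the divisor-refined identity $\sum_{\gcd(k,q)=e}B_k^{(s)}=\sum_{\gcd(k,q)=e}B_k^{(t)}$ for every divisor $e\mid q$.

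The main obstacle is precisely to deduce this divisor-refined identity from the plain isospectrality \eqref{eq:ikeda}, and I would handle it by disjointness of poles. If $\gcd(k,q)=e$, then $\gcd(ks_j,q)=e$, so every $\zeta^{\pm ks_j}$ is a primitive $(q/e)$-th root of unity; consequently the rational function $E_e:=\sum_{\gcd(k,q)=e}\big(B_k^{(s)}-B_k^{(t)}\big)$ vanishes at infinity and has all its poles among the primitive $(q/e)$-th roots of unity. For distinct divisors $e$ these pole sets are pairwise disjoint, while \eqref{eq:ikeda} asserts $\sum_{e}E_e=0$. Matching principal parts at each primitive $(q/e)$-th root of unity then forces $E_e$ to have no poles at all; being a rational function with no poles that vanishes at infinity, $E_e\equiv 0$. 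This gives the divisor-refined identity and, with the previous paragraph, the isospectrality of the enlarged pair.

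Finally, for non-isometry I would invoke the isometry criterion together with multiset cancellation. An isometry of the new spaces would yield a unit $c$ with
\[
\{\pm c\,s_1,\dots,\pm c\,s_n\}\ \uplus\ r\,\{\pm c\,u_i\}
=\{\pm t_1,\dots,\pm t_n\}\ \uplus\ r\,\{\pm u_i\}
\]
as multisets modulo $q$, the appended blocks carrying multiplicity $r$. Since $c$ is a unit, $\{\pm c\,u_i\}=\{\pm u_i\}$ (each equal to the full set of units), so this common block cancels and leaves $\{\pm c\,s_j\}=\{\pm t_j\}$, i.e.\ an isometry of the original pair, contradicting the hypothesis. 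Thus the enlarged spaces are isospectral, non-isometric, of dimension $2n+r\varphi(q)-1$, with cyclic fundamental group of order $q$, proving the theorem. I expect the generating-function bookkeeping and the disjoint-poles step to be the only delicate points, the isometry argument being purely combinatorial once the appended block is taken to exhaust the units.
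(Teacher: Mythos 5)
Your proposal is correct, and its skeleton coincides with the paper's corrected argument (the statement itself is cited from \cite{LauretLinowitz}, whose proof of it this paper repairs in the closing remark of Section~\ref{sec:errata}): the same construction of appending $r$ copies of a set of representatives of $(\Z/q\Z)^{\times}/\{\pm1\}$, and the same pivotal observation that the appended factor $A_k$ depends on $k$ only through $e=\gcd(k,q)$, so that isospectrality of the enlarged pair reduces to the divisor-refined identities $\sum_{\gcd(k,q)=e}B^{(s)}_k=\sum_{\gcd(k,q)=e}B^{(t)}_k$. This is exactly the pitfall in the original proof in \cite{LauretLinowitz}, which treated the appended factor as $\Phi_q(z)^{-r}$ uniformly in $k$; you sidestepped it correctly. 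Where you genuinely diverge is in how the refined identities are obtained: the paper gets them for free from Ikeda's refined spectral invariants --- Proposition~\ref{prop:specinvariantsIkeda}\eqref{item:F^k} asserts that each $F^{(k)}_{\Gamma}$ is determined by the spectrum --- whereas you re-derive them from plain isospectrality by pole separation (summands with $\gcd(k,q)=e$ have poles only at primitive $(q/e)$-th roots of unity, these sets are pairwise disjoint over divisors, so each block difference is pole-free and vanishes at infinity, hence is identically zero). Your route is thus self-contained, at the cost of essentially reproving Ikeda's invariance result, whose standard proof is this very pole-separation argument. Your non-isometry step, cancelling the full-unit block from the multiset equality of Proposition~\ref{prop:lensisometries}, is correct (multiset union is cancellative and multiplication by a unit permutes $(\Z/q\Z)^{\times}$) and is the standard argument.
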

Here, $\varphi(q)$ stands for the Euler's totient function, that is, $\varphi(q)=|(\Z/q\Z)^\times|$.

Ikeda~\cite{Ikeda80_3-dimI} found the isospectral and non-isometric pair $L(11;1,2,3)$ and $L(11;1,2,4)$ of $5$-dimensional lens spaces. 
Theorem~\ref{thm:crece-dimension} implies that, for any $r\in\N$, there are pairs of $(5+10r)$-dimensional lens spaces that are isospectral and non-isometric.

With the help of a computer, in \cite{LauretLinowitz}, it was established for $n\leq 14$ that there are isospectral and non-isometric $(2n-1)$-lens spaces with fundamental group of order $11$ if and only if $n\in\{3,7,8,11,12,13\}$. 

It follows that the highest volume of a pair of isospectral and non-isometric $(2n-1)$-dimensional spherical space forms is at most $\vol(S^{2n-1})/11$ if $n\equiv k\pmod{5}$ for some $k\in\{3,7,8,11,12,13\}$. 
We prove in Theorem~\ref{thm:q<10} that two isospectral $(2n-1)$-dimensional lens spaces with volume $\geq \vol(S^{2n-1})/10$ are necessarily isometric. 
Both facts combined answer Question~\ref{question:highestvolume-manifold} for infinitely many odd dimensions, essentially, $60\%$ of the cases. 
This is precisely the second main theorem of the article. 

\begin{theorem} \label{thm-main:manifolds}
If $n\in\{3,7,8\}$ or $n=5r+k$ for some integer $r\geq2$ and $k\in\{1,2,3\}$, then $\vol(S^{2n-1})/11$ is the highest volume of an isospectral pair of $(2n-1)$-dimensional non-isometric spherical space forms. 
\end{theorem}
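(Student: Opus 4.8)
The plan is to read the statement as the conjunction of an \emph{upper bound} — no isospectral, non-isometric pair of $(2n-1)$-dimensional spherical space forms has volume exceeding $\vol(S^{2n-1})/11$ — and an \emph{existence} statement — for every admissible $n$ some such pair attains that volume. Since all the listed values satisfy $n\geq 3$, we always operate in odd dimension $2n-1\geq 5$, the only regime where Question~\ref{question:highestvolume-manifold} is nontrivial (in even dimensions the only space forms are $S^d$ and $P^d(\R)$, which are not isospectral, and dimension $3$ admits no examples). Throughout, I would use the volume identity $\vol(S^d/\Gamma)=\vol(S^d)/|\Gamma|$ to convert a volume inequality into an inequality on group orders.

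For the upper bound I would argue by contradiction. Suppose $S^{2n-1}/\Gamma$ and $S^{2n-1}/\Gamma'$ are isospectral, non-isometric, and of volume strictly greater than $\vol(S^{2n-1})/11$; then $|\Gamma|=|\Gamma'|\leq 10$. The quoted consequence of \cite[Lem.~4.4]{LauretLinowitz} says that, in odd dimension $\geq 5$, an isospectral non-isometric pair in which either fundamental group is non-cyclic must have order at least $24$. As $10<24$, both $\Gamma$ and $\Gamma'$ are forced to be cyclic, so both quotients are lens spaces. But Theorem~\ref{thm:q<10} asserts that two isospectral $(2n-1)$-dimensional lens spaces of volume $\geq\vol(S^{2n-1})/10$ — equivalently with $|\Gamma|\leq 10$ — are isometric, contradicting non-isometry. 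Hence no isospectral non-isometric pair has volume above $\vol(S^{2n-1})/11$.

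For existence I must exhibit, for each admissible $n$, an isospectral non-isometric pair with $|\Gamma|=11$; lens spaces suffice, being spherical space forms with cyclic fundamental group. The base cases are furnished by the computer search, which produces such pairs exactly for $n\in\{3,7,8,11,12,13\}$ with $n\leq 14$; in particular this disposes of $n\in\{3,7,8\}$ outright. For the remaining values $n=5r+k$ with $r\geq 2$ and $k\in\{1,2,3\}$, I would apply Theorem~\ref{thm:crece-dimension} with $q=11$, so that $\varphi(11)=10$ and each application raises the dimension by $10s$, i.e.\ raises $n$ by $5s$, while keeping $|\Gamma|=11$ and preserving isospectrality and non-isometry. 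Selecting the base residue $n_0\equiv k\pmod 5$ — namely $n_0=3$ for $k=3$, $n_0=7$ for $k=2$, and $n_0=11$ for $k=1$ — each target $n$ is written as $n_0+5s$ with $s\geq 0$, where $s=0$ occurs only for $(k,r)=(1,2)$ (the base case $n=11$) and otherwise $s\geq 1$, so that Theorem~\ref{thm:crece-dimension} delivers the pair.

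Combining the two halves shows the maximal volume is attained precisely at $|\Gamma|=11$ and equals $\vol(S^{2n-1})/11$. The only genuinely deep inputs are Theorem~\ref{thm:q<10} and \cite[Lem.~4.4]{LauretLinowitz}; granting these, the main point requiring care in the present argument is the bookkeeping verifying that the three residue families $n\equiv 1,2,3\pmod 5$ with thresholds $11,7,3$ are exactly the set $\{3,7,8\}\cup\{5r+k:r\geq 2,\ k\in\{1,2,3\}\}$ named in the hypothesis, together with the (trivial but essential) observation that the order threshold $24$ strictly exceeds $10$, which is what excludes the non-cyclic case without any further analysis.
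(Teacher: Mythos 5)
Your proof is correct and follows essentially the same route as the paper: the upper bound via the non-cyclic order bound $\geq 24$ from \cite[Lem.~4.4]{LauretLinowitz} together with Theorem~\ref{thm:q<10}, and existence via the computer-found base cases $n\in\{3,7,8,11,12,13\}$ propagated by Theorem~\ref{thm:crece-dimension} with $q=11$. Your residue bookkeeping (bases $3,7,11$ for $k=3,2,1$, with $s=0$ only at $n=11$) matches the paper's intended argument exactly.
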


The paper is organized as follows. 
Section~\ref{sec:preliminaries} contains preliminaries on the spectrum of spherical orbifolds. 
In Section~\ref{sec:manifolds} we prove Theorem~\ref{thm:q<10}, which implies Theorem~\ref{thm-main:manifolds}.
The first main theorem, Theorem~\ref{thm-main:orbifolds}, is established in Section~\ref{sec:orbifolds}. 
Section~\ref{sec:errata} is devoted to correcting a wrong statement in \cite{LauretLinowitz}.

\subsection*{Acknowledgments}
The authors thank Juliana Cornago for her assistance in proofreading this article, and the referee for a 
thorough reading and several detailed corrections.

\section{Preliminaries}\label{sec:preliminaries}

The goal of this section is to introduce the main tool: the spectral generating function associated to a spherical orbifold.  

\subsection{Spectra of spherical orbifolds}
It is well known that the spectrum of the Laplace--Beltrami operator associated to the $d$-dimensional unit sphere $S^d$ is given as follows: the $k$-th distinct eigenvalue is $\lambda_k:=k(k+d-1)$ with multiplicity $\dim \mathcal H_k$, where $\mathcal H_k$ is the space of complex harmonic homogeneous polynomials on $(d+1)$-variables of degree $k$.
That is, 
\begin{equation*}
\Spec(S^d)
= \Big\{\!\!\Big\{ 			\underbrace{\lambda_k,\dots,\lambda_k}_{ \dim \mathcal H_k\text{-times}} : k\geq0 
\Big\}\!\!\Big\}
.
\end{equation*}
Although it will not be used, we recall that
$
\dim\mathcal H_k=\binom{k+d}{d}-\binom{k+d-2}{d}.
$

The group $\Ot(d+1)$ acts on $\mathcal H_k$ as follows: $(a\cdot P)(z)=P(a^{-1}z)$ for $a\in\Ot(d+1)$ and $P\in\mathcal P_k$, where $z$ denotes the column vector with the variables $z_1,\dots,z_{d+1}$ and $a^{-1}z$ stands for the matrix multiplication. 
Indeed, $\mathcal H_k$ is an irreducible representation of $\Ot(d+1)$. 

In this article, we call a \emph{spherical orbifold} a quotient of the form $S^d/\Gamma$ with $\Gamma$ a finite subgroup of $\Ot(d+1)$, endowed with the Riemannian metric induced by the round metric on $S^d$ with constant sectional curvature $+1$ (some authors include in the definition of spherical orbifolds the quotients of unit balls by orthogonal actions of finite groups). 
See \cite{Gordon12-orbifold} for details. 
If two finite subgroups $\Gamma$ and $\Gamma'$ of $\Ot(d+1)$ are conjugate, then the corresponding orbifolds $S^d/\Gamma$ and $S^d/\Gamma'$ are isometric.

Let $\Gamma$ be a finite subgroup of $\Ot(d+1)$. 
The spectrum of the spherical orbifold $S^{d}/\Gamma$ is given by 
\begin{equation}\label{eq:Spec(S^d/Gamma)}
\Spec(S^d/\Gamma)
= \Big\{\!\!\Big\{ 			\underbrace{\lambda_k,\dots,\lambda_k}_{ \dim \mathcal H_k^\Gamma\text{-times}} : k\geq0 
\Big\}\!\!\Big\}
.
\end{equation}
Here, $\mathcal H_k^\Gamma$ denotes the $\Gamma$-invariant elements in $\mathcal H_k$ with respect to the action defined above. 

As an immediate consequence of \eqref{eq:Spec(S^d/Gamma)}, one has that two $d$-dimensional spherical space forms $S^{d}/\Gamma$ and $S^{d}/\Gamma'$ are isospectral (i.e., $\Spec(S^d/\Gamma)=\Spec(S^d/\Gamma')$) if and only if 
\begin{equation}\label{eq:dimH_k^Gamma}
\dim \mathcal H_k^{\Gamma} = \dim \mathcal H_k^{\Gamma'}
\qquad\text{for all $k\geq0$}. 
\end{equation}

\subsection{Spectral invariants}

In \cite{IkedaYamamoto79}, Ikeda and Yamamoto associated to $S^d/\Gamma$ the generating function
\begin{equation}
F_{\Gamma}(z)=\sum_{k\geq0} \dim\mathcal H_k^\Gamma\, z^k. 
\end{equation}
The great importance of this function is due to the next fact, which follows from
\eqref{eq:dimH_k^Gamma}:
\begin{equation}
\Spec(S^d/\Gamma)=\Spec(S^d/\Gamma')
\qquad\Longleftrightarrow\qquad F_{\Gamma}(z)=F_{\Gamma'}(z)
. 
\end{equation}
Consequently, any property determined by $F_{\Gamma}(z)$ will be a spectral invariant, for instance, the set of poles (we will see next that it is a rational function), their corresponding multiplicities, principal parts, etc.

Using Molien's formula, one obtains (see \cite[(2.4)]{Wolf01})
\begin{equation}\label{eq:Molien}
F_{\Gamma}(z)=\frac{1-z^2}{|\Gamma|} \sum_{\gamma\in\Gamma} \frac{1}{\det(\Id_{d+1}-z\gamma)},
\end{equation}
where $\det(\Id_{d+1}-z\gamma)$ stands for the determinant of $\Id_{d+1}-z\gamma\in\Ot(d+1)$.
This expression was first found by Ikeda~\cite[Thm.~2.2]{Ikeda80_3-dimI}.

\begin{remark}\label{rem:volume}
It is well known that two isospectral compact Riemannian orbifolds share the same dimension and volume. 
Consequently, if $\Spec(S^d/\Gamma)=\Spec(S^{d'}/\Gamma')$, then $d=d'$ and $|\Gamma|=|\Gamma'|$ by \eqref{eq:vol}.

Curiously, both facts can be elegantly shown from \eqref{eq:Molien} for spherical orbifolds. 
In other words, $d$ and $|\Gamma|$ are determined by $F_{\Gamma}(z)$, as we next observe. 

It is clear that $F_{\Gamma}(z)$ has a pole at $z=1$ of order $d$ since
$$
F_{\Gamma}(z)=\frac{1}{|\Gamma|} \frac{1-z^2}{(1-z)^{d+1}} + \frac{1-z^2}{|\Gamma|} \sum_{\gamma\in\Gamma,\,\gamma\neq\Id_{d+1}} \frac{1}{\det(\Id_{d+1}-z\gamma)}
,
$$
and $\det(\Id_{d+1}-z\gamma)=(1-z)^{d+1}$ if and only if $\gamma=\Id_{d+1}$. 
Consequently, $d$ is determined by $F_{\Gamma}(z)$. 
Moreover, the $(-d)$-th coefficient of its Laurent series at $z=1$ is given by 
\begin{align*}
\lim_{z\to1} (z-1)^{d}F_{\Gamma}(z) &
=\lim_{z\to1} (-1)^{d}\frac{1+z}{|\Gamma|} 
= \frac{2(-1)^{d}}{|\Gamma|} ,
\end{align*}  
thus $|\Gamma|$ is determined by $F_{\Gamma}(z)$. 
\end{remark}

A spherical space form is a spherical orbifold $S^{d}/\Gamma$ with $\Gamma$ acting freely on $S^{d}$, or equivalently, $1$ is not an eigenvalue of any non-trivial element in $\Gamma$. 
It is well known that if $d=2n$, the only $d$-dimensional spherical space forms are $S^d$ and $P^d(S)=S^{d}/\{\pm\Id_{d+1}\}$.

\begin{remark}
Let $S^d/\Gamma$ be an odd-dimensional spherical space form.
We claim that $\det(\Id_{d+1}-z\gamma)=\det(z\Id_{d+1}-\gamma)=:\det(z-\gamma)$ for all $\gamma\in\Gamma$, hence \eqref{eq:Molien} becomes
\begin{equation}\label{eq:Molien2}
F_{\Gamma}(z)=\frac{1-z^2}{|\Gamma|} \sum_{\gamma\in\Gamma} \frac{1}{\det(z-\gamma)}.
\end{equation}

Let $\gamma$ be an element in $\Gamma$ of order $k$. 
We write $\spec(\gamma)$ for the multiset of eigenvalues of the orthogonal matrix $\gamma$. 
We have that $\det(\Id_{d+1}-z\gamma)=\prod_{\lambda\in\spec(\gamma)}(1-z\lambda)$. 
Since $1\notin \spec(\gamma')$ for all $\gamma'\neq\Id_{d+1}$ in $\Gamma$, 
it follows that the elements in $\spec(\gamma)$ are primitive $k$-th roots of unity. 
Moreover, since $\gamma$ is an orthogonal matrix, its eigenvalues come in conjugate pairs in the sense that if $\lambda\neq\pm1$ is in $\spec(\gamma)$, then $\lambda$ and $\bar \lambda$ have the same multiplicity in $\spec(\gamma)$. 
We conclude when $k\geq3$ that 
\begin{equation}
\begin{aligned}
\det(\Id_{d+1}-z\gamma)&
=\prod_{\lambda\in\spec(\gamma): \op{Im}(\lambda)>0}(1-z\lambda)(1-z\bar\lambda)
=\prod_{\lambda\in\spec(\gamma): \op{Im}(\lambda)>0}(\bar \lambda-z)(\lambda-z)
\\ &
=\prod_{\lambda\in\spec(\gamma): \op{Im}(\lambda)>0}(z-\lambda)(z-\bar\lambda)
=\det(z-\gamma)
.
\end{aligned}
\end{equation}
If $k=2$, then $\gamma=-\Id_{d+1}$, thus $\det(\Id_{d+1}-z\gamma)=(1+z)^{d+1} =\det(z-\gamma)$.
If $k=1$, then $\gamma=\Id_{d+1}$, thus $\det(\Id_{d+1}-z\gamma)=(1-z)^{d+1}=(z-1)^{d+1}= \det(z-\gamma)$ since $d+1$ is even. 

\end{remark}

Ikeda found the following spectral invariants for odd-dimensional spherical space forms (Prop.~2.7 and Cor.~2.8 in \cite{Ikeda80_3-dimI}, Prop.~1.5(3) in \cite{Ikeda80_3-dimII}). 

\begin{proposition}\label{prop:specinvariantsIkeda}
Let $S^{2n-1}/\Gamma$ be a spherical space form. 
The following properties are determined by $F_{\Gamma}(z)$: 
\begin{enumerate}
\item\label{item:orders} 
The set of orders of the elements in $\Gamma$: $\orders(\Gamma):=\{k\in\N: \exists  \gamma\in\Gamma\text{ of order $k$}\}$. 

\item\label{item:maxmultip}
For $k\in \orders(\Gamma)$,
$$
\max_{\gamma\in\Gamma(k)} \{ \text{multiplicity of $\xi_{k}$ as an eigenvalue of $\gamma$} \}
,
$$ 
where $\xi_k=e^{2\pi\mi/k}$, and $\Gamma(k)$ stands for the set of elements in $\Gamma$ with order $k$. 

\item\label{item:F^k}
For $k\in \orders(\Gamma)$, the generating function
$$
F_{\Gamma}^{(k)}(z)
:=\sum_{\gamma\in\Gamma(k)} \frac{1}{\det(\Id_{2n}-z\gamma)}
=\sum_{\gamma\in\Gamma(k)} \frac{1}{\det(z-\gamma)}
. 
$$
\end{enumerate}
\end{proposition}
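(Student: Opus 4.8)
The plan is to analyze the rational function $G_{\Gamma}(z):=\sum_{\gamma\in\Gamma}\frac{1}{\det(z-\gamma)}$, which by \eqref{eq:Molien2} equals $\frac{|\Gamma|}{1-z^2}\,F_{\Gamma}(z)$ and is therefore determined by $F_{\Gamma}(z)$, since $|\Gamma|$ is (Remark~\ref{rem:volume}). First I would record its pole structure. Grouping the sum by the order of the elements gives $G_{\Gamma}(z)=\sum_{k\in\orders(\Gamma)}F_{\Gamma}^{(k)}(z)$. For $\gamma\in\Gamma(k)$ with $k\geq3$, the remark preceding the proposition shows that $\spec(\gamma)$ consists of primitive $k$-th roots of unity occurring in conjugate pairs, that $\det(\gamma)=\prod_{\lambda}\lambda=1$, and that $\det(z-\gamma)=\prod_{\lambda\in\spec(\gamma)}(z-\lambda)$ vanishes exactly at primitive $k$-th roots of unity. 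Hence the poles of $F_{\Gamma}^{(k)}$ lie among the primitive $k$-th roots of unity, and for distinct $k$ these pole sets are disjoint. (The degenerate cases $k=1,2$ are immediate and I would treat them separately.)

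For part~\eqref{item:F^k} I would recover $F_{\Gamma}^{(k)}$ by separating principal parts. Because the $F_{\Gamma}^{(m)}$ have poles on pairwise disjoint sets of roots of unity, the principal part of $G_{\Gamma}$ at a primitive $k$-th root $\zeta$ coincides with that of $F_{\Gamma}^{(k)}$. Each summand $1/\det(z-\gamma)$ tends to $0$ as $z\to\infty$, so $F_{\Gamma}^{(k)}$ is a proper rational function equal to the sum of its principal parts; thus $F_{\Gamma}^{(k)}(z)=\sum_{\zeta}[\text{principal part of }G_{\Gamma}\text{ at }\zeta]$, summed over primitive $k$-th roots $\zeta$, which is manifestly determined by $F_{\Gamma}$. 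Part~\eqref{item:orders} then follows from $k\in\orders(\Gamma)\iff F_{\Gamma}^{(k)}\not\equiv0$: since $\det(\gamma)=1$ forces $\lim_{z\to\infty}z^{2n}/\det(z-\gamma)=1$, one gets $\lim_{z\to\infty}z^{2n}F_{\Gamma}^{(k)}(z)=|\Gamma(k)|$, which is nonzero precisely when $\Gamma(k)\neq\emptyset$ (and incidentally recovers $|\Gamma(k)|$).

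Part~\eqref{item:maxmultip} is the crux. Writing $M_k$ for the maximal multiplicity of $\xi_k$, I would identify it with the order of the pole of $F_{\Gamma}^{(k)}$ at $z=\xi_k$, a spectral invariant by part~\eqref{item:F^k}. Since each summand $1/\det(z-\gamma)=\prod_{\lambda\in\spec(\gamma)}(z-\lambda)^{-1}$ has a pole at $\xi_k$ of order $\mathrm{mult}(\xi_k;\gamma)\le M_k$, the pole order is at most $M_k$; the real issue is to exclude cancellation of the leading Laurent coefficient. Setting $\xi_k=e^{\phi\mi}$ with $\phi=2\pi/k$, the elementary identities $\xi_k-\bar\xi_k=2\mi\sin\phi$ and $(\xi_k-e^{\theta\mi})(\xi_k-e^{-\theta\mi})=2(\cos\phi-\cos\theta)e^{\phi\mi}$ show that, for every $\gamma$ attaining the maximum, the coefficient of $(z-\xi_k)^{-M_k}$ equals a fixed nonzero constant (the same for all such $\gamma$, since they all have $M_k$ eigenvalues equal to $\xi_k$ and hence $n-M_k$ conjugate pairs) times $\prod_{i}\big(2(\cos\phi-\cos\theta_i)\big)^{-1}$, the product over the conjugate pairs $\{e^{\pm\theta_i\mi}\}$ of eigenvalues of $\gamma$ other than $\xi_k^{\pm1}$.

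The decisive observation is that $\xi_k=e^{2\pi\mi/k}$ is the primitive $k$-th root of unity closest to $1$: any other primitive root $e^{\theta_i\mi}$ satisfies $\cos\theta_i<\cos\phi$, so each product above is strictly positive. Consequently the leading coefficient is a nonzero multiple of a sum of positive reals and cannot vanish, whence the pole order is exactly $M_k$ and $M_k$ is determined by $F_{\Gamma}$. I expect this positivity step to be the only genuine obstacle; it is precisely what forces the statement to single out $\xi_k=e^{2\pi\mi/k}$ rather than an arbitrary primitive $k$-th root of unity.
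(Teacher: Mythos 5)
Your proposal is correct, but note that the paper itself offers no proof of this proposition: it is quoted from Ikeda (Prop.~2.7 and Cor.~2.8 in the 1980 Osaka J.\ Math.\ paper, and Prop.~1.5(3) of its sequel), so there is no in-paper argument to compare against. What you have written is a legitimate self-contained reconstruction, and it hits exactly the points that make the statement true: (i) freeness of the action forces every eigenvalue of an order-$k$ element to be a \emph{primitive} $k$-th root of unity, so the functions $F_{\Gamma}^{(k)}$ have pairwise disjoint pole supports and can be recovered from $G_{\Gamma}=\tfrac{|\Gamma|}{1-z^2}F_{\Gamma}$ by summing principal parts (each $F_{\Gamma}^{(k)}$ being a proper rational function, it equals the sum of its principal parts); (ii) $z^{2n}F_{\Gamma}^{(k)}(z)\to|\Gamma(k)|$ at infinity, so $F_{\Gamma}^{(k)}\not\equiv0$ exactly when $k\in\orders(\Gamma)$; and (iii) the positivity $\cos\theta<\cos(2\pi/k)$ for every primitive $k$-th root $e^{\pm\mi\theta}\neq\xi_k^{\pm1}$, which rules out cancellation of the coefficient of $(z-\xi_k)^{-M_k}$ and is precisely why the statement singles out $\xi_k=e^{2\pi\mi/k}$. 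Two minor quibbles, neither a gap: the limit $\lim_{z\to\infty}z^{2n}/\det(z-\gamma)=1$ holds because $\det(z-\gamma)$ is monic of degree $2n$, not because $\det(\gamma)=1$ (that fact is irrelevant here); and in (iii) you should say explicitly that elements with $\mathrm{mult}(\xi_k;\gamma)<M_k$ contribute nothing to the $(z-\xi_k)^{-M_k}$ coefficient, while all max-attaining elements contribute the same nonzero constant $e^{-\mi\phi(n-M_k)}(2\mi\sin\phi)^{-M_k}$ times a strictly positive real, so the sum cannot vanish --- which is what you clearly intend.
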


Consequently, two odd-dimensional isospectral spherical space forms $S^{2n-1}/\Gamma$ and $S^{2n-1}/\Gamma'$ satisfy $\orders(\Gamma)=\orders(\Gamma')$ and, for any $k\in\orders(\Gamma)$, $F_{\Gamma}^{(k)}(\Gamma)= F_{\Gamma}^{(k)}(\Gamma')$ and 
the highest multiplicity of $\xi_k$ as an eigenvalue of  the elements in $\Gamma(k)$ and $\Gamma'(k)$ coincide.

\subsection{Lens orbifolds}\label{subsec:lens}

A \emph{lens space} is a quotient of the form $L(q;s)=S^{2n-1}/\Gamma_{q,s}$ for some $q\in\N$ and $s=(s_1,\dots,s_n)\in\Z^n$ satisfying $\gcd(q,s_j)=1$ for all $1\leq j\leq n$, where 
\begin{equation}\label{eq:Gamma_qs}
\Gamma_{q,s}
= \Big\langle
\begin{pmatrix}
R(\tfrac{2\pi s_1}{q}) \\
&\ddots\\
&&R(\tfrac{2\pi s_n}{q}) 
\end{pmatrix}
\Big\rangle
\quad\text{ and }\quad
R(\theta)=  \begin{pmatrix}
\cos(\theta)&\sin(\theta)\\ -\sin(\theta)&\cos(\theta)\end{pmatrix}
.
\end{equation}
The conditions $\gcd(q,s_j)=1$ for all $j$ ensure that $|\Gamma_{q,s}|=q$ and $\Gamma_{q,s}$ acts freely on $S^{2n-1}$, thus $L(q;s)$ is a $(2n-1)$-dimensional differentiable manifold with fundamental group isomorphic to $\Gamma_{q,s}\simeq\Z/q\Z$.
We will always endow a lens space $L(q;s)$ with the round Riemannian metric (with constant sectional curvature one), which makes it a spherical space form.

We next relax the above conditions to introduce (what we call in this article) lens orbifolds. 

We fix a dimension $d\in\N$. 
We write $n=\lfloor\frac{d+1}{2}\rfloor$, so $d=2n$ or $d=2n-1$, depending on whether $d$ is even or odd, respectively. 
We consider the maximal torus $T$ of $\SO(d+1)$ given by 
\begin{equation*}
T=
\big\{
t(\theta_1,\dots, \theta_{n})
:\theta_1,\dots, \theta_{n} \in\R
\big\}  ,
\end{equation*}
where 
\begin{equation*}
t(\theta_1,\dots, \theta_{n})=
\begin{cases}
\diag\big(
R(\theta_1),\dots,
R(\theta_{n}), 1\big)
	&\text{if $d$ is even},
\\
\diag \big(
R(\theta_1),\dots,
R(\theta_{n})\big)
	&\text{if $d$ is odd}.
\end{cases}
\end{equation*}

A \emph{lens orbifold} is a quotient of the form $L_d(q;s):=S^d/\Gamma_{q,s}$, with $\Gamma_{q,s}=\langle t(\frac{2\pi s_1}{q}, \dots, \frac{2\pi s_n}{q})\rangle$  for some $q\in\N$ and $s\in\Z^n$ satisfying $\gcd(q,s_1,\dots,s_n)=1$.  
The last condition ensures that $|\Gamma_{q,s}|=q$. 
We endow the orbifold $L_d(q;s)$ with the round Riemannian metric induced by the constant sectional curvature one metric on $S^d$. 
If the dimension $d$ is odd and is clear from the context, we abbreviate $L_d(q;s)$ by $L(q;s)$, coinciding with the notation of lens spaces.

The multiset of eigenvalues of $t(\frac{2\pi s_1}{q}, \dots, \frac{2\pi s_n}{q})^k=t(\frac{2k\pi s_1}{q}, \dots, \frac{2k\pi s_n}{q})$ is
\begin{equation}\label{eq:eigenvalues}
\begin{cases}
\{\!\{\xi_q^{k s_j}, \xi_q^{-k s_j}:1\leq j\leq n\}\!\}
	&\quad\text{if $d$ is odd},\\
\{\!\{\xi_q^{k s_j}, \xi_q^{-k s_j}:1\leq j\leq n\}\!\}\cup\{\!\{1\}\!\}
	&\quad\text{if $d$ is even},\\
\end{cases}
\end{equation} 
where $\xi_q=e^{2\pi\mi /q}$, a primitive $q$-th root of unity. 
Now, \eqref{eq:Molien} gives
\begin{equation}\label{eq:MolienLens}
\begin{aligned}
F_{\Gamma_{q,s}}(z)&
=\frac{1-z^2}{q} \sum_{k=0}^{q-1} 
	\left(\prod_{j=1}^n \frac{1}{(z-\xi_q^{ks_j})(z-\xi_q^{-ks_j})}
	\right) 
	\frac{1}{(1-z)^{d-2n+1}}
	.
\end{aligned}
\end{equation}

Let $\Gamma$ be any cyclic subgroup of $\SO(d+1)$. 
The spherical orbifold $S^d/\Gamma$ is isometric to some lens orbifold $L_d(q;s)$. 
Indeed, by setting $q=|\Gamma|$ and letting $\gamma$ be any generator of $\Gamma$, it is well known that $\gamma$ is conjugate to some element (of order $q$) in the maximal torus $T$, say $t(\frac{2\pi s_1}{q}, \dots, \frac{2\pi s_n}{q})$ for some $s_1,\dots,s_n\in\Z$ necessarily satisfying $\gcd(q,s_1,\dots,s_n)=1$ (otherwise, it has order strictly less than $q$). 
Thus, we obtain that $S^d/\Gamma$ is isometric to $L(q;s)$.  
Consequently, the next result classifies the isometry classes of lens orbifolds (see e.g.\ \cite[Prop.~2.9]{Lauret-spec0cyclic}).

\begin{proposition}\label{prop:lensisometries}
Given two $d$-dimensional lens orbifolds $L_d(q;s)$ and $L_d(q';s')$, the following conditions are equivalent:
\begin{itemize}
\item $L_d(q;s)$ and $L_d(q';s')$ are homeomorphic. 

\item $L_d(q;s)$ and $L_d(q';s')$ are isometric. 

\item $q=q'$, and there are $l\in\Z$ prime to $q$, a permutation $\sigma$ of $\{1,\dots,n\}$, and $\epsilon_1,\dots,\epsilon_n\in\{\pm1\}$ such that 
\begin{equation}\label{eq:lensisometry}
s_{\sigma(j)}'\equiv l\epsilon_j s_j \pmod{q}\qquad\forall \,j=1,\dots,n .
\end{equation}
\end{itemize}
\end{proposition}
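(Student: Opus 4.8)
The plan is to prove the three conditions equivalent by establishing the geometric equivalence ``isometric $\Leftrightarrow$ arithmetic'' directly, noting that ``isometric $\Rightarrow$ homeomorphic'' is trivial, and then closing the loop with the deep rigidity statement ``homeomorphic $\Rightarrow$ arithmetic''.

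\emph{Isometric $\Leftrightarrow$ arithmetic.} I would first recall the standard reduction: since $S^d$ is the orbifold universal cover, any isometry $L_d(q;s)\to L_d(q';s')$ lifts to an isometry of $S^d$, that is, to an element $g\in\Iso(S^d)=\Ot(d+1)$ carrying $\Gamma_{q,s}$ onto $\Gamma_{q',s'}$; hence the two orbifolds are isometric if and only if $\Gamma_{q,s}$ and $\Gamma_{q',s'}$ are conjugate in $\Ot(d+1)$. Let $\gamma$ and $\gamma'$ denote the distinguished generators $t(\tfrac{2\pi s_1}{q},\dots,\tfrac{2\pi s_n}{q})$ and $t(\tfrac{2\pi s'_1}{q'},\dots,\tfrac{2\pi s'_n}{q'})$. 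A conjugating $g$ forces $q=q'$ and sends $\gamma$ to a generator of $\Gamma_{q',s'}$, necessarily of the form $(\gamma')^{l}$ with $\gcd(l,q)=1$; comparing the two multisets of eigenvalues in \eqref{eq:eigenvalues}, which conjugation preserves, yields a permutation $\sigma$ and signs $\epsilon_j$ with $s'_{\sigma(j)}\equiv l\epsilon_j s_j\pmod q$. Conversely, given the arithmetic data I would realize each operation by an explicit element of $\Ot(d+1)$: the multiplier $l$ by replacing $\gamma$ with the alternative generator $\gamma^{l}$ of the same group, the permutation $\sigma$ by the block-permutation matrix conjugating $t(\theta_1,\dots,\theta_n)$ to $t(\theta_{\sigma^{-1}(1)},\dots,\theta_{\sigma^{-1}(n)})$, and each sign $\epsilon_j=-1$ by the reflection $\diag(1,-1)$ in the $j$-th block, which conjugates $R(\theta)$ to $R(-\theta)$; the product of these elements conjugates $\Gamma_{q,s}$ onto $\Gamma_{q',s'}$.

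\emph{Homeomorphic $\Rightarrow$ arithmetic.} Since every isometry is a homeomorphism, only this reverse direction remains, and it is the substantive one. In the manifold case ($\gcd(q,s_j)=1$ for all $j$) I would recover $q$ from $\pi_1\cong\Z/q\Z$ and then determine $(s_1,\dots,s_n)$ up to the freedom of $l$, $\sigma$, $\epsilon_j$ by invoking the classical homeomorphism classification of lens spaces of Reidemeister, Franz and de Rham: the Reidemeister--Franz torsion is a homeomorphism invariant whose value is a symmetric function of the roots of unity $\xi_q^{s_j}$, and de Rham's rigidity theorem identifies exactly when two such tuples give the same torsion. For genuine orbifolds I would run the same argument after stratifying $L_d(q;s)$ by isotropy type: the singular strata are themselves lens orbifolds supported on the coordinate subspaces with $\gcd(q,s_j)>1$, so a homeomorphism records both $q$ and the multiset $\{\gcd(q,s_j)\}_j$, while the torsion computation on the top (regular) stratum supplies the remaining congruences.

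\emph{Main obstacle.} The hard part will be precisely \emph{homeomorphic $\Rightarrow$ arithmetic}: even for manifolds it rests on the full Reidemeister--Franz--de Rham theorem (together with topological invariance of torsion), and for orbifolds one must further check that an arbitrary homeomorphism respects the stratification by isotropy and that reassembling the torsion data across strata introduces no identifications among the $s_j$ beyond those allowed by $l$, $\sigma$ and $\epsilon_j$. Managing this bookkeeping is the delicate point; the remaining implications are routine.
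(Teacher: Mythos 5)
The paper never proves this proposition: it is quoted verbatim from \cite[Prop.~2.9]{Lauret-spec0cyclic} (see the sentence immediately preceding the statement), so your attempt has to stand on its own. Your half \emph{isometric $\Leftrightarrow$ arithmetic} is correct and is the standard argument: an isometry of the quotients lifts to an element of $\Ot(d+1)$ conjugating $\Gamma_{q,s}$ onto $\Gamma_{q',s'}$; conjugacy forces $q=q'$ and equality of the eigenvalue multisets \eqref{eq:eigenvalues} of a generator $\gamma$ and of its image $(\gamma')^{l}$, which after matching conjugate pairs is exactly \eqref{eq:lensisometry}; conversely $(l,\sigma,\epsilon)$ is realized by a change of generator, a block permutation, and blockwise $\diag(1,-1)$'s.

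The genuine gap is \emph{homeomorphic $\Rightarrow$ arithmetic}. For honest lens spaces (all $\gcd(q,s_j)=1$) your appeal to Reidemeister--Franz--de Rham plus topological invariance of torsion is legitimate. But for genuine orbifolds the step you defer as ``delicate bookkeeping'' --- that an arbitrary homeomorphism respects the isotropy stratification --- is not bookkeeping: it is false, and with it the implication itself is false if ``homeomorphic'' means a homeomorphism of underlying topological spaces. Concretely, take $L_3(q;1,0)=S^3/\langle \diag(R(\tfrac{2\pi}{q}),\Id_2)\rangle$ (admissible, since $\gcd(q,1,0)=1$). Writing $S^3$ as the join $S^1 * S^1$, the group rotates the first join factor and fixes the second, so the underlying space is $\bigl(S^1/(\Z/q\Z)\bigr) * S^1$, homeomorphic to $S^1 * S^1=S^3$ for \emph{every} $q$. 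Thus $L_3(2;1,0)$ and $L_3(3;1,0)$ have homeomorphic underlying spaces but different $q$ (different volumes), so no argument --- torsion, strata, or otherwise --- can recover the arithmetic data from the underlying space. The homeomorphism clause is only tenable when read in the orbifold category, where preservation of isotropy strata holds by definition; but under that reading your torsion-on-the-regular-stratum machinery is the wrong frame. One instead lifts the orbifold equivalence to an equivariant homeomorphism of $S^d$, and the remaining issue is whether topological conjugacy of these linear actions implies linear conjugacy: that is de Rham rigidity, which is the classical theorem in your free (manifold) case, but for non-free actions it is a genuinely hard question --- nonlinear similarity in the sense of Cappell--Shaneson shows that topologically conjugate orthogonal representations of cyclic groups of order divisible by $4$ need not be linearly conjugate. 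So this direction cannot be treated as routine; as written, your proof of it does not go through.
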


We next obtain a useful consequence of Proposition~\ref{prop:lensisometries} after introducing some notation for lens orbifolds. 
\begin{notation}\label{notation}
We fix a dimension $d\in\N$. 
For $q\in\N$, $s_1,\dots,s_l\in\Z$ and $n_1,\dots,n_l\in\N$ satisfying $\gcd(q,s_1,\dots,s_l)=1$ and $n=\lfloor\frac{d+1}{2}\rfloor=n_1+\dots+n_l$, we abbreviate 
$$
L_d(q;s_1^{n_1},\dots ,s_l^{n_l})
= L_d\big(q; 
\underbrace{s_1,\dots,s_1}_{\text{$n_1$-times}}
,\dots,
\underbrace{s_l,\dots,s_l}_{\text{$n_l$-times}}
\big)
,
$$ 
which is a $d$-dimensional lens orbifold with volume $\vol(S^d)/q$. 

Similarly, for $q\in\N$, $s_1,\dots,s_l\in\Z$ and $n_1,\dots,n_l\in\N_0:=\N\cup \{0\}$ satisfying $\gcd(s_i,q)=1$ for all $i$, we abbreviate 
$$
L(q;s_1^{n_1},\dots ,s_l^{n_l})
= L\big(q; 
\underbrace{s_1,\dots,s_1}_{\text{$n_1$-times}}
,\dots,
\underbrace{s_l,\dots,s_l}_{\text{$n_l$-times}}
\big)
,
$$ 
which is a $(2n-1)$-dimensional lens space with $n=n_1+\dots+n_l$ with fundamental group of order $q$. 
\end{notation}

\begin{lemma}\label{lem:exponentemaximo}
Let $q$ be a positive integer satisfying $q\geq3$, and write $q_0=\varphi(q)/2$.   
Let $s_1<s_2<\dots<s_{q_0}$ be all positive integers less than $q/2$ and prime to $q$ (so $s_1=1$). 
For any $(2n-1)$-dimensional lens space with fundamental group of order $q$, there are $n_1,\dots,n_{q_0}\in\N_0$  with $n=n_1+\dots+n_{q_0}$ such that 
$L$ is isometric to $L\big(q;s_1^{n_1},\dots,s_{q_0}^{n_{q_0}}\big)$
and $n_1\geq \max(n_2,\dots,n_{q_0})$. 
\end{lemma}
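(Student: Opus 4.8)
The plan is to invoke the isometry criterion of Proposition~\ref{prop:lensisometries}, exploiting its three operations—multiplication of all coordinates by a common unit $l\in(\Z/q\Z)^\times$, permutation of the coordinates, and independent sign changes $\epsilon_j$—to bring an arbitrary lens space $L=L(q;s_1',\dots,s_n')$ (with each $s_j'$ coprime to $q$) into the claimed normal form.

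First I would record the combinatorial structure of the units modulo $q$. Since $q\geq3$, the involution $a\mapsto -a\equiv q-a$ acts on $(\Z/q\Z)^\times$ without fixed points (a fixed point would force $2a\equiv0\pmod q$, impossible for a unit when $q\geq3$), so the $\varphi(q)$ units split into $q_0=\varphi(q)/2$ pairs $\{s_i,-s_i\}$, and $s_1<\dots<s_{q_0}$ are exactly the representatives lying in $(0,q/2)$. Consequently every $s_j'$ is congruent to $\pm s_{i(j)}$ for a unique index $i(j)$. Applying Proposition~\ref{prop:lensisometries} with $l=1$, with the $\epsilon_j$ chosen so that $s_j'$ becomes $+s_{i(j)}$, and with a permutation $\sigma$ grouping equal values together, I obtain that $L$ is isometric to $L(q;s_1^{m_1},\dots,s_{q_0}^{m_{q_0}})$, where $m_i=\#\{j:i(j)=i\}$ and $m_1+\dots+m_{q_0}=n$.

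It then remains to arrange $n_1\geq\max(n_2,\dots,n_{q_0})$. The key observation is that multiplication by a fixed unit $l$ commutes with the involution $a\mapsto-a$, hence permutes the $q_0$ pairs and therefore induces a permutation $\pi_l$ of $\{s_1,\dots,s_{q_0}\}$ through their representatives in $(0,q/2)$ (the sign being absorbed by the $\epsilon_j$). I would pick an index $i_0$ achieving $m_{i_0}=\max_i m_i$ and take $l\equiv s_{i_0}^{-1}\pmod q$, which exists since $s_{i_0}$ is a unit. Applying Proposition~\ref{prop:lensisometries} with this $l$, the resulting multiplicity vector is a rearrangement of $(m_1,\dots,m_{q_0})$, and the relation $l\,s_{i_0}\equiv s_1=1$ forces the multiplicity of $s_1$ to equal $n_1=m_{i_0}=\max_i m_i$. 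Since $\pi_l$ is a bijection, no other coordinate maps to $s_1$, so the remaining $n_j$ are precisely the $m_i$ with $i\neq i_0$; after relabeling them in increasing order of the $s$-indices they all satisfy $n_j\leq m_{i_0}=n_1$, which is exactly $n_1\geq\max(n_2,\dots,n_{q_0})$.

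I expect the only delicate point to be bookkeeping: verifying the fixed-point-free property of $a\mapsto -a$ so that the pairing into $q_0$ classes is exact, and checking that the two applications of Proposition~\ref{prop:lensisometries} compose legitimately (using transitivity of the isometry relation, or equivalently performing a single application with multiplier $l=s_{i_0}^{-1}$ together with the appropriate $\sigma$ and $\epsilon_j$). Both are elementary, so I anticipate no serious obstacle.
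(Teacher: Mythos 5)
Your proposal is correct and follows essentially the same route as the paper's proof: first use Proposition~\ref{prop:lensisometries} (with $l=1$, signs, and a permutation) to bring the parameters into $\{s_1,\dots,s_{q_0}\}$, then multiply by the inverse of the parameter carrying the maximal multiplicity, so that multiplication by this unit permutes the pairs $\{\pm s_i\}$ and sends the maximal multiplicity to the parameter $s_1=1$. Your explicit verification that $a\mapsto -a$ is fixed-point free on $(\Z/q\Z)^\times$ for $q\geq3$ just spells out what the paper treats as clear, so there is no substantive difference.
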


\begin{proof}
The third condition in Proposition~\ref{prop:lensisometries} allows us to choose the parameters of $L$ among $\{s_1,\dots,s_{q_0}\}$. 
In other words, there are $n_1,\dots,n_{q_0}\in\N$ such that $L\simeq L\big(q;s_1^{n_1},\dots,s_{q_0}^{n_{q_0}}\big)$. 

Suppose $n_i=\max(n_1,\dots,n_{q_0})$. 
Let $l\in\Z$ be an inverse of $s_i$ module $q$, that is, $ls_i\equiv 1\pmod q$. 
Proposition~\ref{prop:lensisometries} implies that
$$
L
\simeq L\big(q;(ls_1)^{n_1},\dots,(ls_{q_0})^{n_{q_0}}\big). 
$$
Clearly, $\{\pm ls_1,\dots,\pm ls_{q_0}\}$ is a representative set of $(\Z/q\Z)^\times$.
Hence, there is a permutation $\sigma$ of $\{1,\dots,q_0\}$ such that $ls_j\equiv \epsilon_j s_{\sigma(j)}\pmod q$ for some $\epsilon_1,\dots,\epsilon_{q_0} \in\{\pm1\}$. 
Write $\tau=\sigma^{-1}$.
Note that $\tau(1)=i$. 
Now, Proposition~\ref{prop:lensisometries} yields that
$$
L
\simeq L\big(q;s_1^{n_i},s_2^{n_{\tau(2)}}, \dots,s_{q_0}^{n_{\tau(q_0)}}\big),
$$
which completes the proof. 
\end{proof}

\begin{remark}\label{rem:highestmultiplicity}
Let $L(q;s_1^{n_1},\dots,s_{q_0}^{n_{q_0}})=S^{2n-1}/\Gamma_{q,s}$ be a spherical space form written as in Lemma~\ref{lem:exponentemaximo}. 
Looking at the eigenvalues of the elements in $\Gamma_{q,s}$ in \eqref{eq:eigenvalues}, it follows that 
$$
n_1=\max_{\gamma\in\Gamma(q)} \{ \text{multiplicity of $\xi_{q}$ as an eigenvalue of $\gamma$} \}. 
$$ 
Hence, $n_1$ is determined by $F_{\Gamma_{q,s}}(z)$ by Proposition~\ref{prop:specinvariantsIkeda}\eqref{item:maxmultip}. 
\end{remark}

\section{Highest volume of spherical space forms}\label{sec:manifolds}

The main goal of this section is to prove that two isospectral lens spaces with fundamental group of order $q\leq 10$ are necessarily isometric (Theorem~\ref{thm:q<10}). 
However, we prove it for all positive integers $q$ such that $\varphi(q)\leq 6$, where $\varphi$ stands for the Euler's totient function (i.e., $\varphi(q)=|(\Z/q\Z)^\times|$).

\begin{proposition}\label{prop:varphi(q)<=6}
Let $q$ be a positive integer such that $\varphi(q)\leq 6$.
Then, two isospectral lens spaces with fundamental groups of order $q$ are necessarily isometric. 
\end{proposition}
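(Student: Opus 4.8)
The plan is to turn isospectrality into an identity of explicit rational functions and then recover the parameters of the lens spaces, organising the argument by the value of $q_0:=\varphi(q)/2$. The hypothesis $\varphi(q)\le 6$ leaves only $q\in\{1,2,3,4,5,6,7,8,9,10,12,14,18\}$, so $q_0\le 3$ throughout; the cases $q\in\{1,2\}$ are immediate, as then the lens space is unique in each dimension. For $q\ge 3$, Lemma~\ref{lem:exponentemaximo} puts any lens space with $|\Gamma|=q$ in the form $L(q;s_1^{n_1},\dots,s_{q_0}^{n_{q_0}})$ with $n_1\ge\max_j n_j$. The decisive structural point is that the group $G$ of class permutations arising from Proposition~\ref{prop:lensisometries} is precisely $(\Z/q\Z)^\times/\{\pm1\}$ acting simply transitively on the $q_0$ classes, so $|G|=q_0$ and two normalized profiles yield isometric lens spaces exactly when they lie in one $G$-orbit. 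Moreover $n$ (equivalently $q$) is spectral by Remark~\ref{rem:volume}, and $n_1$ is spectral by Remark~\ref{rem:highestmultiplicity}. When $q_0\le 2$ (that is, $\varphi(q)\le 4$) this already finishes the proof: for $q_0=1$ the only lens space is $L(q;1^n)$, and for $q_0=2$ the profile is $(n_1,n-n_1)$ while $G\cong\Z/2\Z$ acts as the transposition, so $n$ and $n_1$ pin down the orbit, hence the isometry class.

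The substance lies in the case $q_0=3$, i.e.\ $q\in\{7,9,14,18\}$. Here $(\Z/q\Z)^\times\cong\Z/6\Z$, so $G\cong\Z/3\Z$ acts by cyclic rotation only; the $G$-orbit of $(n_1,n_2,n_3)$ consists of its three rotations, and the sole way two normalized profiles with equal $n_1$ and $n$ can fail to be isometric is that one is the reversal of the other, which occurs exactly when $n_1,n_2,n_3$ are pairwise distinct. Thus I must show the spectrum separates a profile from its reversal, something the coarse invariants $n$ and $n_1$ cannot see.

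To this end I would exploit the invariant $F_\Gamma^{(q)}(z)$ of Proposition~\ref{prop:specinvariantsIkeda}\eqref{item:F^k}, through its leading Laurent coefficient at the primitive root $z=\xi_q$. Reading \eqref{eq:MolienLens}, when $n_1$ is the strict maximum the only summands of $F_\Gamma^{(q)}$ reaching the top pole order $n_1$ at $\xi_q$ are those indexed by $\gamma^{\pm1}$ (because $s_1=1$), and these two coincide since $\det(z-\gamma)=\det(z-\gamma^{-1})$; the coefficient is therefore
\[
\frac{2}{(\xi_q-\xi_q^{-1})^{n_1}}\prod_{i=2}^{3}\frac{1}{(\xi_q-\xi_q^{s_i})^{n_i}(\xi_q-\xi_q^{-s_i})^{n_i}}.
\]
Writing $w_i=|\xi_q-\xi_q^{s_i}|\,|\xi_q-\xi_q^{-s_i}|=4\sin\!\big(\tfrac{\pi(s_i-1)}{q}\big)\sin\!\big(\tfrac{\pi(s_i+1)}{q}\big)$, the modulus of this spectral invariant is a positive constant times $w_2^{-n_2}w_3^{-n_3}$. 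Equating it with the same quantity for the second lens space and using $n_2+n_3=n_2'+n_3'$ collapses the comparison to $(w_2/w_3)^{\,n_2'-n_2}=1$; since a direct check gives $w_2\ne w_3$ for each of $q=7,9,14,18$, this forces $(n_2,n_3)=(n_2',n_3')$, so the two profiles coincide and the lens spaces are isometric.

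The main obstacle is exactly this $q_0=3$ orientation problem: one must isolate a Laurent coefficient of $F_\Gamma^{(q)}$ that is genuinely asymmetric under reversal and then verify the trigonometric non-degeneracy $w_2\ne w_3$ case by case. A further point requiring care is the degenerate profiles in which the maximal multiplicity is attained more than once, since then extra summands contribute to the Laurent coefficient at $\xi_q$ and the clean formula above breaks; in those cases reversal already lies in the $G$-orbit, so no orientation need be detected, but one must still recover the full multiset $\{n_1,n_2,n_3\}$ (and not merely $n_1$ and $n$), which I would do by incorporating the additional terms — whose weights remain pairwise distinct — or by comparing the leading coefficients at several primitive $q$-th roots simultaneously.
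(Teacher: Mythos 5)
Your reduction via Lemma~\ref{lem:exponentemaximo} and Proposition~\ref{prop:lensisometries}, your use of the spectral invariants $n$, $n_1$ (Remarks~\ref{rem:volume} and~\ref{rem:highestmultiplicity}) and $F_{\Gamma}^{(q)}$ (Proposition~\ref{prop:specinvariantsIkeda}\eqref{item:F^k}), and your treatment of the case where $n_1$ is a \emph{strict} maximum are correct, and in fact essentially reproduce the paper's argument: your leading Laurent coefficient of $F_{\Gamma}^{(q)}$ at $z=\xi_q$ differs from the paper's invariant $|P_{\Gamma}(\xi_q)|$ only by a factor depending on $q$ and $n_1$, and both rest on the same monotonicity $w_2<w_3$ (which, incidentally, needs no case-by-case check: $w_i=2\cos(2\pi/q)-2\cos(2\pi s_i/q)$, so $1<s_2<s_3<q/2$ already gives it).

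The genuine gap is the degenerate case you defer to your last paragraph. Once $n$ and $n_1$ are known, the normalized profiles still in competition are not only the strict-max ones $(n_1,n_2,n_3)$ with $n_2,n_3<n_1$, but also the tied-max profile $(n_1,n_1,n-2n_1)$ (this is the trichotomy \eqref{eq-varphi=6:conditions}); for example $(5,4,2)$, $(5,2,4)$, $(5,3,3)$ and $(5,5,1)$ all share $n=11$, $n_1=5$. Your comparison only treats two strict-max profiles against each other, so it never shows that a strict-max profile and the tied-max profile cannot be isospectral. For the tied-max profile, \emph{four} summands of $F_{\Gamma'}^{(q)}$ (those indexed by $k\equiv\pm1$ and $k\equiv\pm s_2^{-1}$) attain the top pole order $n_1$ at $\xi_q$, and the leading coefficient becomes
\begin{equation*}
\frac{2}{(\xi_q-\xi_q^{-1})^{n_1}}
\left(\frac{1}{(w_2^*)^{n_1}(w_3^*)^{\,n-2n_1}}+\frac{1}{(w_3^*)^{n_1}(w_2^*)^{\,n-2n_1}}\right),
\qquad w_i^*:=(\xi_q-\xi_q^{s_i})(\xi_q-\xi_q^{-s_i})\in\C,
\end{equation*}
a sum of two \emph{complex} monomials. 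Its modulus no longer factors as a monomial in $w_2,w_3$, so the injectivity trick collapses, and the triangle inequality gives only one-sided bounds that do not preclude an accidental equality with a strict-max value. Your two suggested remedies do not repair this: ``incorporating the additional terms'' is precisely the difficulty rather than its solution, and the leading coefficients at the other primitive $q$-th roots are Galois conjugates of the one at $\xi_q$, hence carry no independent information. The paper closes exactly this hole with a different device: it evaluates the polynomial $P_{\Gamma}(z)=\tfrac12\Phi_q(z)^{n_1}F_{\Gamma}^{(q)}(z)$ at the \emph{real} point $z=1$, where the three terms become positive reals $r_j=2-2\cos(2\pi s_j/q)$, and proves the strict inequality \eqref{eq-varphi=6:P(1)-P'(1)} separating strict-max from tied-max profiles. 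Some argument of this kind --- a comparison at a point where all contributions are positive, or another honest non-vanishing proof --- is required before your proof is complete.
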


\begin{proof}
Let $L=S^{2n-1}/\Gamma$ be a $(2n-1)$-dimensional lens space with $|\Gamma|=q$. 
The goal is to show that $F_{\Gamma}(z)$ determines $L$ up to isometry, or equivalently, $\Gamma$ up to conjugation in $\Ot(2n)$. 
We recall from Remark~\ref{rem:volume} that $n$ and $q$ are determined by $F_{\Gamma}(z)$. 

If $\varphi(q)\leq 2$, then there is up to isometry exactly one $(2n-1)$-dimensional lens space with fundamental group of order $q$.  
It remains to consider the cases $\varphi(q)=4$ and $\varphi(q)=6$ since $\varphi(q)$ is even for any $q\geq3$. 

Assume $\varphi(q)=4$. 
Let $s_2$ be the only integer prime to $q$ satisfying $1<s_2<q/2$. 
Lemma~\ref{lem:exponentemaximo} ensures that any $(2n-1)$-dimensional lens space with fundamental group of order $q$ is isometric to 
$$
L(q;1^{n-h},s_2^{h})
\quad 
\text{for some }h\in\{0,1,\dots,\lfloor n/2\rfloor\}.
$$
Since $n-h$ is determined by $F_{\Gamma}(z)$ (see Remark~\ref{rem:highestmultiplicity}), so is $h$ and consequently the isometry class of $L$. 

From now on we suppose that $\varphi(q)=6$, that is, $q\in\{7,9,14,18\}$. 
We denote by $1=s_1<s_2<s_{3}$ all positive integers less than $q/2$ and prime to $q$. 
One can easily check that 
\begin{align}\label{eq-varphi=6:uv}
s_2^2&\equiv \pm s_3\pmod q,&
s_3^2&\equiv \pm s_2\pmod q&
s_2s_3&\equiv \pm 1\pmod q.
\end{align}

By Lemma~\ref{lem:exponentemaximo}, we do not lose generality by assuming that 
$
L=L\big(q;1^{n_1}, s_2^{n_2},s_{3}^{n_{3}}\big)
$
for some non-negative integers $n_1,n_2,n_{3}$ satisfying $n=n_1+n_2+n_{3}$ and $n_1\geq\max(n_2,n_{3})$.
Moreover, we can also assume that
\begin{align}\label{eq-varphi=6:conditions}
\text{either}\qquad
n_1>\max(n_2,n_3)
\qquad\text{or}\qquad 
n_1=n_2> n_3.
\qquad\text{or}\qquad 
n_1=n_2=n_3.
\end{align}
Notice the case $n_1=n_3>n_2$ does not appear above since Proposition~\ref{prop:lensisometries} and \eqref{eq-varphi=6:uv} imply that $L(q;1^{n_1},s_2^{n_2},s_3^{n_1})\simeq L(q;1^{n_1}, s_2^{n_1},s_3^{n_2})$ after multiplying the parameters of $L(q;1^{n_1},s_2^{n_2},s_3^{n_1})$ by $s_2$.

If $\gamma$ is any generator of $\Gamma$, then the subset of elements of order $q$ is equal to $\Gamma(q)=\{\gamma^k: \gcd(k,q)=1\}$. 
Hence, the function introduced in Proposition~\ref{prop:specinvariantsIkeda}\eqref{item:F^k} is given by
\begin{equation}\label{eq:Fprincipalsingularpart}
\begin{aligned}
F_{\Gamma}^{(q)}(z) &
= 
\sum_{\substack{0\leq k<q \\ \gcd(k,q)=1}}
\prod_{j=1}^{3} 
	\frac{1}{(z-\xi_q^{ks_j})^{n_j} (z-\xi_q^{-ks_j})^{n_j}} 
= 
2\sum_{i=1}^{3}
\prod_{j=1}^{3} 
	\frac{1}{(z-\xi_q^{s_is_j})^{n_j} (z-\xi_q^{-s_is_j})^{n_j}} 
.
\end{aligned}
\end{equation}
Now, \eqref{eq:Fprincipalsingularpart} and \eqref{eq-varphi=6:uv} imply that 
\begin{equation}\label{eq-varphi=6:Ftilde}
\begin{aligned}
F_{\Gamma}^{(q)}(z) &
= \sum_{i=1}^{3}
\frac{2}{
	(z-\xi_q^{s_i})^{n_1} (z-\xi_q^{-s_i})^{n_1}
	(z-\xi_q^{s_is_2})^{n_2}(z-\xi_q^{-s_is_2})^{n_2}
	(z-\xi_q^{s_is_3})^{n_3}(z-\xi_q^{-s_is_3})^{n_3}
}
\\ & 
=
\frac{2}{
	(z-\xi_q)^{n_1} (z-\xi_q^{-1})^{n_1}
	(z-\xi_q^{s_2})^{n_2}(z-\xi_q^{-s_2})^{n_2}
	(z-\xi_q^{s_3})^{n_3}(z-\xi_q^{-s_3})^{n_3}
}
\\ & \quad +
\frac{2}{
	(z-\xi_q^{s_2})^{n_1} (z-\xi_q^{-s_2})^{n_1}
	(z-\xi_q^{s_3})^{n_2}(z-\xi_q^{-s_3})^{n_2}
	(z-\xi_q)^{n_3}(z-\xi_q^{-1})^{n_3}
}
\\ & \quad +
\frac{2}{
	(z-\xi_q^{s_3})^{n_1} (z-\xi_q^{-s_3})^{n_1}
	(z-\xi_q)^{n_2}(z-\xi_q^{-1})^{n_2}
	(z-\xi_q^{s_2})^{n_3}(z-\xi_q^{-s_2})^{n_3}
}
\\ & 
=
\frac{2}{\Phi_q(z)^{n_1}} P_{\Gamma}(z)
,
\end{aligned}
\end{equation}
where $\Phi_q(z)=(z-\xi_q)(z-\xi_q^{-1}) (z-\xi_q^{s_2})(z-\xi_q^{-s_2}) (z-\xi_q^{s_3})(z-\xi_q^{-s_3})$ is the $q$-th cyclotomic polynomial and 
\begin{equation}
\begin{aligned}
P_{\Gamma}(z)&
= 	(z-\xi_q^{s_2})^{n_1-n_2}(z-\xi_q^{-s_2})^{n_1-n_2}
	(z-\xi_q^{s_3})^{n_1-n_3}(z-\xi_q^{-s_3})^{n_1-n_3}
	\\& +
	(z-\xi_q^{s_3})^{n_1-n_2}(z-\xi_q^{-s_3})^{n_1-n_2}
	(z-\xi_q)^{n_1-n_3}(z-\xi_q^{-1})^{n_1-n_3}
	\\ &+ 
	(z-\xi_q)^{n_1-n_2}(z-\xi_q^{-1})^{n_1-n_2}
	(z-\xi_q^{s_2})^{n_1-n_3}(z-\xi_q^{-s_2})^{n_1-n_3}	
.
\end{aligned}
\end{equation}
Since $F_{\Gamma}(z)$ determines $n_1$ (see Remark~\ref{rem:highestmultiplicity}) and $F_{\Gamma}^{(q)}(z)$ by Proposition~\ref{prop:specinvariantsIkeda}\eqref{item:F^k}, it follows that $P_{\Gamma}(z)$ is also determined by $F_{\Gamma}(z)$.

We next see that $P_{\Gamma}(z)$ determines which condition in \eqref{eq-varphi=6:conditions} occurs. 
Clearly, $P_{\Gamma}\equiv 3$ if and only if $n_1=n_2=n_3$. 
Suppose that $\Gamma$ has parameters $(n_1,n_2,n_3)$ with $n_1>\max(n_2,n_3)$ and $\Gamma'$ has parameters  $(n_1,n_1,n_3')$ with $n_1>n_3'$. 
Since $n=n_1+n_2+n_3=2n_1+n_3'$, we have $n_2+n_3=n_1+n_3'$. 
By writing $r_j=(1-\xi_q^{s_j})(1-\xi_q^{-s_j}) =2-2\cos(\tfrac{2s_j\pi}{q})\in\R$ for any $j=1,\dots,3$, it follows that
\begin{equation}\label{eq-varphi=6:P(1)-P'(1)}
\begin{aligned}
P_{\Gamma}(1)-P_{\Gamma'}(1)&
	= r_1^{n_1-n_2}r_2^{n_1-n_3} 
	+ r_2^{n_1-n_2}r_3^{n_1-n_3} 
	+ r_3^{n_1-n_2}r_1^{n_1-n_3} 
	-\big(r_1^{n_1-n_3'} +r_2^{n_1-n_3'} +r_3^{n_1-n_3'} \big)
\\ &
	= r_1^{n_1-n_2} (r_2^{n_1-n_3} - r_1^{n_1-n_3})
	+ r_2^{n_1-n_2} (r_3^{n_1-n_3} - r_2^{n_1-n_3})
	- r_3^{n_1-n_2} (r_3^{n_1-n_3} - r_1^{n_1-n_3}) 
\\ &
	< r_2^{n_1-n_2} (r_3^{n_1-n_3} - r_1^{n_1-n_3})
	- r_3^{n_1-n_2} (r_3^{n_1-n_3} - r_1^{n_1-n_3}) 
\\ &
	= (r_2^{n_1-n_2} - r_3^{n_1-n_2})
	(r_3^{n_1-n_3} - r_1^{n_1-n_3})<0
\end{aligned}
\end{equation}
since $n_1-n_3'= (n_1-n_2)+(n_1-n_3)$ and $0<r_1<r_2<r_3$. 
We conclude that $F_{\Gamma}(z)$ distinguishes each of the three conditions in \eqref{eq-varphi=6:conditions}. 

We recall that $n_1$ is determined by $F_{\Gamma}(z)$ by Remark~\ref{rem:highestmultiplicity}, as well as $n=n_1+n_2+n_3$ by Remark~\ref{rem:volume}. 
It only remains to show that the parameters $n_2$ and $n_3$ are determined by $F_{\Gamma}(z)$ in each case in \eqref{eq-varphi=6:conditions} separately. 
If $n_1=n_2=n_3$, the assertion is trivial. 
If $n_1=n_2>n_3$, then $n_3=n-2n_1$ is determined by $F_{\Gamma}(z)$. 

We next deal with the case $n_1>\max(n_2,n_3)$. 
Since $n_3=n-n_1-n_2$, we have that 
\begin{equation}
\begin{aligned}
|P_{\Gamma}(\xi_q)|&= 
\left|
(\xi_q-\xi_q^{s_2})^{n_1-n_2} (\xi_q-\xi_q^{-s_2})^{n_1-n_2} (\xi_q-\xi_q^{s_3})^{2n_1+n_2-n} (\xi_q-\xi_q^{-s_3})^{2n_1-n+n_2}
\right|
\\ & 
=\left|
(\xi_q-\xi_q^{s_2})(\xi_q-\xi_q^{-s_2})
\right|^{n_1}
\left|
(\xi_q-\xi_q^{s_3})(\xi_q-\xi_q^{-s_3})
\right|^{2n_1-n}
\left|
	\frac{(\xi_q-\xi_q^{s_3}) (\xi_q-\xi_q^{-s_3})}
	{(\xi_q-\xi_q^{s_2})(\xi_q-\xi_q^{-s_2})} 
\right|^{n_2}
.
\end{aligned}
\end{equation}
Since $n$ and $n_1$ are known and $|(\xi_q-\xi_q^{s_3})(\xi_q-\xi_q^{-s_3})|>|(\xi_q-\xi_q^{s_2})(\xi_q-\xi_q^{-s_2})|$ because $1<s_2<s_3<q/2$, we deduce that $n_2$ is determined by $|P_{\Gamma}(\xi_q)|$.
In fact, the function $n_2\mapsto |P_{\Gamma}(\xi_q)|$ is strictly increasing, and consequently it is injective.
We conclude that $n_2$ and $n_3=n-n_1-n_2$ are determined by $F_{\Gamma}(z)$, and this completes the proof. 
\end{proof}

\begin{theorem}\label{thm:q<10}
Two isospectral lens spaces with fundamental group of order $q\leq 10$ are necessarily isometric. 
\end{theorem}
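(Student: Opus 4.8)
The plan is to reduce this statement directly to Proposition~\ref{prop:varphi(q)<=6}. That proposition already establishes spectral rigidity of lens spaces whenever the order $q$ of the fundamental group satisfies $\varphi(q)\leq 6$, so it suffices to check that every positive integer $q\leq 10$ meets this hypothesis.

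First I would simply tabulate the values of Euler's totient function on the relevant range: $\varphi(1)=\varphi(2)=1$, then $\varphi(3)=\varphi(4)=\varphi(6)=2$, then $\varphi(5)=\varphi(8)=\varphi(10)=4$, and finally $\varphi(7)=\varphi(9)=6$. In every case $\varphi(q)\leq 6$, so the numerical condition required by Proposition~\ref{prop:varphi(q)<=6} holds throughout the range $q\leq 10$.

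The theorem then follows immediately. Given two isospectral $(2n-1)$-dimensional lens spaces with fundamental groups of order $q\leq 10$, the bound $\varphi(q)\leq 6$ places us in the hypotheses of Proposition~\ref{prop:varphi(q)<=6}, which forces the two lens spaces to be isometric.

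There is no genuine obstacle at this stage; all the substantive work has already been carried out in the proof of Proposition~\ref{prop:varphi(q)<=6}, where the cases $\varphi(q)\in\{4,6\}$ require the delicate analysis of the spectral invariant $P_{\Gamma}(z)$ extracted from $F_{\Gamma}^{(q)}(z)$. The only point worth recording is that the bound is sharp exactly at the boundary of this range: $\varphi(11)=10>6$, so $q=11$ is precisely where this elementary reduction ceases to apply. This is consistent with the fact, recalled in the introduction, that isospectral non-isometric lens spaces first occur at $q=11$ via Ikeda's pair $L(11;1,2,3)$ and $L(11;1,2,4)$.
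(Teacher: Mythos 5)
Your proposal is correct and coincides with the paper's own proof: both deduce the theorem immediately from Proposition~\ref{prop:varphi(q)<=6} by observing that $\varphi(q)\leq 6$ for every $q\leq 10$. Your explicit tabulation of the totient values and the remark that $\varphi(11)=10$ marks the sharpness of the reduction are accurate but add nothing beyond the paper's one-line argument.
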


\begin{proof}
The proof follows immediately from Proposition~\ref{prop:varphi(q)<=6} since $\varphi(q)\leq 6$ for all $q\leq 10$. 
\end{proof}

\section{Highest volume of spherical orbifolds}
\label{sec:orbifolds}
In this section we prove Theorem~\ref{thm-main:orbifolds}, that is, $\vol(S^d)/8$ is the maximum volume of an isospectral and non-isometric pair of $d$-dimensional spherical orbifolds for any $d\geq5$. 
We already know that $\vol(S^d)/8$ is a lower bound due to the pair from \cite{LauretLinowitz} of isospectral and non-isometric spherical space forms $S^d/\Gamma$ and $S^d/\Gamma'$ with $\Gamma\simeq\Gamma'\simeq (\Z/2\Z)^3$. 
The equality follows immediately from the next result. 

\begin{theorem}\label{thm:orbifolds<8}
Fix $d\in\N$, and let $\Gamma,\Gamma'$ be finite subgroups of $\Ot(d+1)$. 
If $S^d/\Gamma$ and $S^d/\Gamma'$ are isospectral and  $|\Gamma|=|\Gamma'|\leq7$, then $\Gamma$ and $\Gamma'$ are conjugate in $\Ot(d+1)$, and consequently the spherical orbifolds $S^d/\Gamma$ and $S^d/\Gamma'$ are isometric. 
\end{theorem}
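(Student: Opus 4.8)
The plan is to split the statement into two essentially independent parts: (i) recover from the Molien series $F_{\Gamma}(z)$ the multiset of $\Ot(d+1)$-conjugacy classes of the elements of $\Gamma$, and (ii) show that, when $|\Gamma|\leq 7$, this multiset already forces $\Gamma$ and $\Gamma'$ to be conjugate as subgroups. By Remark~\ref{rem:volume} the integers $d$ and $q:=|\Gamma|=|\Gamma'|$ are spectral invariants, so I may fix $q\in\{1,\dots,7\}$ and run through the isomorphism types of a group of that order: the cyclic groups $\Z/q\Z$, together with $(\Z/2\Z)^2$ when $q=4$ and the symmetric group $S_3$ when $q=6$. Since two orthogonal matrices are conjugate in $\Ot(d+1)$ exactly when they have the same eigenvalues with multiplicities, the conjugacy class of a finite subgroup $G\leq\Ot(d+1)$ isomorphic to a fixed abstract group is determined by the multiplicities of the real-irreducible constituents of the inclusion $G\hookrightarrow\Ot(d+1)$; the whole point is therefore to read these multiplicities off $F_{\Gamma}(z)$.

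For part (i) I would argue pole by pole. Each summand $\det(\Id_{d+1}-z\gamma)^{-1}$ in \eqref{eq:Molien} is a class function of $\gamma$ whose poles sit at the inverses of the eigenvalues of $\gamma$, with orders equal to the corresponding multiplicities; since every $\gamma$ has finite order, all poles are roots of unity of order dividing $q\leq 7$. Fixing a root of unity $\omega$ of order $k\mid q$, the order of the pole of $F_{\Gamma}(z)$ at $z=\omega$ is controlled by the largest multiplicity of $\omega^{-1}$ among the elements $\gamma$, analogously to Proposition~\ref{prop:specinvariantsIkeda}\eqref{item:maxmultip}, and the leading Laurent coefficients package the numbers of elements realizing each eigenvalue pattern. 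For the small orders at hand only a handful of patterns occur, and the elements contributing to a given pole are either power-conjugate (so their identical principal parts reinforce) or carry strictly positive leading coefficients, so no cancellation takes place. Reading off the pole data at every such $\omega$ then recovers the full multiset of characteristic polynomials $\{\det(\Id_{d+1}-z\gamma):\gamma\in\Gamma\}$, making $\Gamma$ and $\Gamma'$ almost conjugate in $\Ot(d+1)$; in particular this multiset records the element orders, so $\Gamma$ and $\Gamma'$ share the isomorphism type ($C_4$ versus $(\Z/2\Z)^2$ and $C_6$ versus $S_3$ being separated by the presence of order-$4$, resp.\ order-$6$, elements, detected by poles at primitive $4$th, resp.\ $6$th, roots of unity).

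For part (ii) I would treat the three shapes separately. In the cyclic case $\Gamma=\langle\gamma\rangle$, the multiset contains a class of an order-$q$ generator; choosing a generator $\gamma'$ of $\Gamma'$ with the same eigenvalues and an $A\in\Ot(d+1)$ with $A\gamma A^{-1}=\gamma'$, one gets $A\Gamma A^{-1}=\langle\gamma'\rangle=\Gamma'$. For $\Gamma\cong(\Z/2\Z)^2$, let $m_1,m_2,m_3$ be the dimensions of the $(-1)$-eigenspaces of the three involutions and $n_{\pm\pm}$ the dimensions of the four simultaneous eigenspaces; then $m_1=n_{-+}+n_{--}$, $m_2=n_{+-}+n_{--}$, $m_3=n_{-+}+n_{+-}$, a system of determinant $2$ which, together with $n_{++}+n_{+-}+n_{-+}+n_{--}=d+1$, recovers the four multiplicities and hence the inclusion representation up to equivalence. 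For $\Gamma\cong S_3$, the isotypic multiplicities $(a,b,c)$ of the trivial, sign and standard representations are recovered from the eigenvalue data of an order-$3$ element (which sees $c$ and $a+b$) and of a reflection (which sees $b+c$), again by a nonsingular system. In every case equal multiplicities mean equivalent representations, hence conjugate subgroups, hence isometric orbifolds.

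The main obstacle is the no-cancellation claim underlying part (i): a priori the leading Laurent coefficients of distinct elements sharing a pole are algebraic numbers that could conspire to vanish, and excluding this is exactly the delicate point. The difficulty is arithmetic and is concentrated at sixth roots of unity, where the coincidence $(1-\xi_6)^{-1}=\xi_6$ is a root of unity, so that $q=6,7$ and elements of order $6$ are the genuinely dangerous case; one must verify by hand that the finitely many eigenvalue patterns available for $q\leq 7$ never produce a vanishing leading coefficient. That the whole scheme is forced to collapse at $q=8$ is guaranteed by the almost-conjugate but non-conjugate pair of $(\Z/2\Z)^3$-subgroups of Rossetti--Schueth--Weilandt used for the lower bound, so the hypothesis $|\Gamma|\leq 7$ must be exploited essentially, either in this cancellation analysis or in the nonsingularity of the linear systems of part (ii).
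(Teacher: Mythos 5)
Your part (ii) is correct, and your part (i) does work for $|\Gamma|\le 4$ and $|\Gamma|=6$: in those cases, at every root of unity the maximal-order contributions to \eqref{eq:Molien} come from elements sharing a single characteristic polynomial ($\gamma$ and $\gamma^{-1}$, or the order-$2$ resp.\ order-$3$ classes in the $\mathbb S_3$ case), so they reinforce and the pole \emph{orders} alone recover the eigenvalue multiplicities -- essentially the paper's own argument there. The genuine gap is at $|\Gamma|=7$ (order $5$ still survives on pole orders: $z=1$ gives $n_1+n_2$ and $z=\xi_5$ gives $\max(n_1,n_2)$, which determines the isometry class). Write $S^d/\Gamma\simeq L_d(7;0^{n_0},1^{n_1},2^{n_2},3^{n_3})$. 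Passing to powers of a generator permutes $(n_1,n_2,n_3)$ cyclically, so \emph{every} primitive seventh root of unity is a pole of the same order $\max(n_1,n_2,n_3)$, while $z=1$ only yields $n_1+n_2+n_3$. Hence pole orders cannot separate, say, $(n_1,n_2,n_3)=(3,2,1)$ from $(3,1,2)$, even though by Proposition~\ref{prop:lensisometries} the isometry relation acts on these patterns exactly by cyclic permutations, so these orbifolds are non-isometric. Everything then rests on the leading Laurent coefficients being pairwise \emph{distinct} as the parameters vary -- an injectivity statement about specific algebraic numbers -- and not merely nonvanishing; moreover, when the maximum is attained twice (e.g.\ $n_1=n_2>n_3$), two non-conjugate pairs of elements contribute poles of equal order at the same point, so cancellation must genuinely be excluded there. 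Your proposal defers all of this to an unperformed ``check by hand'', but that check \emph{is} the theorem; the surrounding bookkeeping is routine.

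The paper fills exactly this hole by a different device: it strips off the $(1-z)$-factors to identify $(1-z)^{d+1-2(n_1+n_2+n_3)}F_{\Gamma}(z)$ with the generating function of the genuine lens space $L(7;1^{n_1},2^{n_2},3^{n_3})$, and then invokes Proposition~\ref{prop:varphi(q)<=6}. The proof of that proposition in the case $\varphi(q)=6$ is precisely the missing analysis: it first separates the cases $n_1>\max(n_2,n_3)$, $n_1=n_2>n_3$, $n_1=n_2=n_3$ by a real inequality for the value at $z=1$ of the numerator polynomial $P_{\Gamma}$, and then pins down $n_2$ via the strict monotonicity of $n_2\mapsto|P_{\Gamma}(\xi_q)|$ -- quantitative arguments well beyond pole bookkeeping, and exactly of the ``leading coefficients are injective in the parameters'' type that you would need. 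Finally, you mislocate the danger: $|\Gamma|=6$ is completely safe (the poles at $\xi_6,\xi_3,-1$ have orders $m_1$, $m_1+m_2$, $2m_1+m_3$, which determine a cyclic group, and the $\mathbb S_3$ case is rigid), while the breakdown at order $8$ is a failure of your part (ii) -- $(\Z/2\Z)^3$ admits almost conjugate, non-conjugate embeddings -- not of the Laurent analysis at sixth roots of unity.
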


\begin{proof}
Let $\Gamma$ be a finite subgroup of $\Ot(d+1)$ with $|\Gamma|\leq 7$. 
The goal is to show that $F_{\Gamma}(z)$ determines $\Gamma$ up to conjugation in $\Ot(d+1)$. 
We divide the proof in terms of the number of elements in $\Gamma$, which is a spectral invariant (see Remark~\ref{rem:volume}). 
The cases $|\Gamma|=1,2,3$ were established in \cite[Prop.~4.1]{LauretLinowitz}.

\medskip\noindent$\bullet$
$|\Gamma|=4$. 
As an abstract group there are two possibilities:  $\Gamma\simeq\Z/4\Z$ (i.e., $\Gamma$ is cyclic) or $\Gamma\simeq(\Z/2\Z)^2$. 
We first determine $F_{\Gamma}(z)$ in terms of the eigenvalues of the elements in $\Gamma$ in each case. 

Suppose first that $\Gamma$ is cyclic. 
Let $\gamma_0$ be any element in $\Gamma$ of order $4$. 
The eigenvalues of the matrix $\gamma_0$ are necessarily as follows:
$\mi$ and $-\mi$ have multiplicity $m_1$,
$-1$ has multiplicity $m_2$, 
and $1$ has multiplicity $d+1-2m_1+m_2$ for some $m_1,m_2\in\N_0=\N\cup\{0\}$ satisfying $d+1-2m_1+m_2\geq0$. 
It follows from \eqref{eq:Molien} that 
\begin{equation}\label{eq4:ciclico}
\begin{aligned}
\frac{4}{1-z^2}F_{\Gamma}(z)&
= \frac{1}{(1-z)^{d+1}}
+\frac{1}{(1+z)^{2m_1}(1-z)^{d+1-2m_1}}
\\ & \quad
+\frac{2}{(1-z\mi)^{m_1} (1+z\mi)^{m_1} (1+z)^{m_2} (1-z)^{d+1-2m_1-m_2}}
.
\end{aligned}
\end{equation}

We now suppose that $\ref{eq4:nociclico}Gamma\simeq(\Z/2\Z)^2$. 
Let $\gamma_1$ and $\gamma_2$ be two distinct non-trivial elements in $\Gamma$; thus $\Gamma=\{\Id_{d+1},\gamma_1,\gamma_2,\gamma_3:=\gamma_1\gamma_2\}$. 
For any $j\in\{1,2,3\}$, $\gamma_j$ has order $2$; we denote by $m_j'$ the multiplicity of the eigenvalue $-1$ of $\gamma_j$. 
We note that for $i,j,k\in\Z$ satisfying $\{i,j,k\}=\{1,2,3\}$, since $\gamma_i\gamma_j=\gamma_k$, one can check that $|m_i'-m_j'|\leq m_k'\leq m_i'+m_j'$ and $m_1'+m_2'+m_3'\equiv0\pmod 2$.
We have that
\begin{equation}\label{eq4:nociclico}
\begin{aligned}
\frac{4}{1-z^2}F_{\Gamma}(z)&
= \sum_{k=0}^3 \frac{1}{\det(1-z\gamma_k)}
= \frac{1}{(1-z)^{d+1}}
+\sum_{j=1}^3 \frac{1}{(1+z)^{m_j'}(1-z)^{d+1-m_j'}}
.
\end{aligned}
\end{equation}

We observe from \eqref{eq4:ciclico} and \eqref{eq4:nociclico} that, for an arbitrary subgroup $\Gamma$ of $\Ot(d+1)$ having four elements, $F_{\Gamma}(z)$ has a pole at $z=\mi$ if and only if $\Gamma$ is cyclic. 
Consequently, $F_{\Gamma}(z)$ determines whether $\Gamma\simeq\Z/4\Z$ or $\Gamma\simeq (\Z/2\Z)^2$. 

Assume that $\Gamma$ is cyclic. 
It is evident from \eqref{eq4:ciclico} that the order of the pole of $F_{\Gamma}(z)$ at $z=\mi$ is $m_1$.
Hence $m_1$ is determined by $F_{\Gamma}(z)$, and so is the function
\begin{multline*}
\frac{4}{1-z^2}F_{\Gamma}(z)
- \frac{1}{(1-z)^{d+1}}
-\frac{1}{(1+z)^{2m_1}(1-z)^{d+1-2m_1}}
\\ 
=\frac{2}{(z-\mi)^{m_1} (z+\mi)^{m_1} (1+z)^{m_2} (1-z)^{d+1-2m_1-m_2}}
,
\end{multline*}
which has a pole of order $m_2$ at $z=-1$. 
We conclude that $F_{\Gamma}(z)$ determines the spectrum of the matrix $\gamma_0$, as well as $\gamma_0$ and $\Gamma$ up to conjugation in $\Ot(d+1)$. 

We now assume that $\Gamma$ is not cyclic. 
By reordering the elements of $\Gamma=\{\Id_{d+1},\gamma_1,\gamma_2,\gamma_3\}$ if necessary, we can assume that $m_1'\geq m_2'\geq m_3'$. 
It is clear from \eqref{eq4:nociclico} that the function $\frac{4}{1-z^2}F_{\Gamma}(z) - \frac{1}{(1-z)^{d+1}}$, which is determined by $F_{\Gamma}(z)$, has a pole at $z=-1$ of order $m_1'$. 
The $(-m_1')$-th term in its Laurent series is given by 
\begin{align*}
\lim_{z\to-1} (1+z)^{m_1'}\left(\frac{4}{1-z^2}F_{\Gamma}(z) - \frac{1}{(1-z)^{d+1}}\right) 
=
\begin{cases}
\frac1{2^{d+1-m_1'}}
	&\text{if }m_1'>m_2',\\
\frac1{2^{d-m_1'}}
	&\text{if }m_1'=m_2'>m_3',\\
\frac{3}{2^{d+1-m_1'}}
	&\text{if }m_1'=m_2'=m_3'.
\end{cases}
\end{align*}
Since $m_1'$ is determined by $F_{\Gamma}(z)$ and the three values in the three rows above are different, it follows that $F_{\Gamma}(z)$ also determines which condition holds.

In the first case, $m_1'>m_2'$, $F_{\Gamma}(z)$ determines
the function
$$
\frac{4}{1-z^2}F_{\Gamma}(z)
-\frac{1}{(1-z)^{d+1}}
-
\frac{1}{(1+z)^{m_1'}(1-z)^{d+1-m_1'}}
=\sum_{j=2}^3 \frac{1}{(1+z)^{m_j'}(1-z)^{d+1-m_j'}}
,
$$
so the same argument as above shows that $F_{\Gamma}(z)$ also determines $m_2'$ and $m_3'$.
In the second case, $m_1'=m_2'>m_3'$, $m_1'$ and $m_2'$ are determined, which gives immediately that so is $m_3'$. 
The third case is obvious. 
It is clear that the values of $m_1',m_2',m_3'$ determine $\Gamma$ up to conjugation in $\Ot(d+1)$, as required.

\medskip\noindent$\bullet$
$|\Gamma|=5$. 
It turns out that $\Gamma$ is cyclic, that is, $\Gamma\simeq \Z/5\Z$. 
Let $\gamma_0$ be any non-trivial element in $\Gamma$. 
In this case, it is adequate to use the language of Subsection~\ref{subsec:lens}. 
Since $\gamma_0$ has order odd, then $\gamma_0\in\SO(d+1)$, and consequently $S^{d}/\Gamma$ is isometric to a lens orbifold.
Therefore, there are non-negative integers $n_0,n_1,n_2$ satisfying $n=\lfloor\frac{d+1}{2}\rfloor=n_0+n_1+n_2$ such that $S^d/\Gamma$ is isometric to 
$$
L:=L_d(5;0^{n_0},1^{n_1},2^{n_2}).
$$
From \eqref{eq:MolienLens}, it follows that
\begin{align*}
\frac{5}{1-z^2}F_{\Gamma}(z)  &
=\frac{1}{(1-z)^{d+1}}+
\frac{2}{(1-z)^{d+1-2(n_1+n_2)} (1-z\xi_5)^{n_1}(1-z\xi_5^4)^{n_1}	(1-z\xi_5^2)^{n_2}(1-z\xi_5^3)^{n_2} }
\\ &
+ \frac{2}{(1-z)^{d+1-2(n_1+n_2)} (1-z\xi_5)^{n_2} (1-z\xi_5^4)^{n_2} (1-z\xi_5^2)^{n_1}(1-z\xi_5^3)^{n_1}}
. 
\end{align*}

The function $\widetilde F_{\Gamma}(z):=
\frac{5}{1-z^2}F_{\Gamma}(z) -\frac{1}{(1-z)^{d+1}} $ is clearly determined by $F_{\Gamma}(z)$ (recall that $d$ is determined by $F_\Gamma$ by Remark~\ref{rem:volume}). 
It has a pole of order $d+1-2(n_1+n_2)$ at $z=1$ since 
\begin{multline} \label{eq5:degreepole}
\lim_{z\to1}(1-z)^{d+1-2(n_1+n_2)}\,\widetilde F_{\Gamma}(z)
= \lim_{z\to1} \left(
\frac{2}{ (z-\xi_5)^{n_1}(z-\xi_5^4)^{n_1}	(z-\xi_5^2)^{n_2}(z-\xi_5^3)^{n_2} }
\right.
\\ 
+\left. \frac{2}{(z-\xi_5)^{n_2} (z-\xi_5^4)^{n_2} (z-\xi_5^2)^{n_1}(z-\xi_5^3)^{n_1}}\right)
\\  
=
\frac{2}{\big(2-2\cos(\tfrac{2\pi}{5})\big)^{n_1} \big(2-2\cos(\tfrac{4\pi}{5})\big)^{n_2}}
+\frac{2}{\big(2-2\cos(\tfrac{2\pi}{5})\big)^{n_2} \big(2-2\cos(\tfrac{4\pi}{5})\big)^{n_1}}>0
.
\end{multline}
We deduce that $n_1+n_2$ is determined by $F_{\Gamma}(z)$.

Note that $F_{\Gamma}(z)$ determines 
\begin{multline}\label{eq:verification}
(1-z)^{d+1-2(n_1+n_2)} F_{\Gamma}(z)
=\frac{1-z^2}{5} 
\left(
	\frac{2}{ (1-z\xi_5)^{n_1}(1-z\xi_5^4)^{n_1}	(1-z\xi_5^2)^{n_2}(1-z\xi_5^3)^{n_2} }
\right.
\\ 
\left.
	+ \frac{2}{(1-z\xi_5)^{n_2} (1-z\xi_5^4)^{n_2} (1-z\xi_5^2)^{n_1}(1-z\xi_5^3)^{n_1}}
	+ \frac{1}{(1-z)^{2(n_1+n_2)}}
\right)
\\
=\frac{1-z^2}{5} \sum_{k=0}^{4} 
	\frac{1}{(1-z\xi_q^{k})^{n_1} (1-z\xi_q^{4k})^{n_1} (1-z\xi_q^{2k})^{n_2} (1-z\xi_q^{3k})^{n_2}}
	,
\end{multline}
which is precisely the generating function associated to the $(2n_1+2n_2-1)$-dimensional lens space $L(5;1^{n_1},2^{n_2})$ by \eqref{eq:MolienLens}. 
Proposition~\ref{prop:varphi(q)<=6} ensures that $n_1,n_2$ are determined by $F_{\Gamma}(z)$. 
We conclude that $F_{\Gamma}(z)$ determines the whole spectrum of $\gamma_0$.
Since any element in $\Ot(d+1)$ is conjugate to a diagonal matrix over the complex number whose entries are its eigenvalues, $F_{\Gamma}(z)$ determines $\gamma_0$ and $\Gamma$ up to conjugation in $\Ot(d+1)$, which is the requested assertion.

\medskip\noindent$\bullet$
$|\Gamma|=6$. 
We have two possibilities:  $\Gamma\simeq\Z/6\Z\simeq \Z/2\Z\oplus\Z/3\Z$ or $\Gamma\simeq\mathbb S_3$ (the symmetric group in three letters). 
We next determine $F_{\Gamma}(z)$ in each case independently. 

We first suppose that $\Gamma$ is cyclic. 
Let $\gamma_0$ be any generator, whose spectrum is as follows: 
$\xi_6,\xi_6^5$ have multiplicity $m_1$, 
$\xi_6^2,\xi_6^4$ have multiplicity $m_2$, 
$-1=\xi_6^3$ has multiplicity $m_3$, 
$+1$ has multiplicity $d+1-2m_1-2m_2-m_3$
Note that $m_1>0$, otherwise $\gamma_0$ would not have order $6$. 
It is a simple matter to deduce from \eqref{eq:Molien} that 
\begin{multline}\label{eq6:ciclico}
\frac{6}{1-z^2}F_{\Gamma}(z)
= \frac{1}{(1-z)^{d+1}}
+\frac{1}{(1-z)^{d+1-2m_1-m_3} (1+z)^{2m_1+m_3}  }
\\  \quad
+\frac{2}{(1-z)^{d+1-2m_1-2m_2} (1-z\xi_6^2)^{m_1+m_2} (1-z\xi_6^4)^{m_1+m_2} }
\\  \quad
+\frac{2}{(1-z)^{d+1-2m_1-2m_2-m_3} (1-z\xi_6)^{m_1} (1-z\xi_6^5)^{m_1} (1-z\xi_6^2)^{m_2} (1-z\xi_6^4)^{m_2} (1+z)^{m_3} }
.
\end{multline}

We now assume that $\Gamma\simeq\mathbb S_3$. 
In this case it is adequate to use the language of representation theory of finite groups. 
We denote by $\pi:\mathbb S_3\to\GL(d+1)$ the faithful representation of degree $d+1$ such that $\pi(\mathbb S_3)=\Gamma$. 
This representation can be decomposed as a sum of irreducible representations of $\mathbb S_3$, which are as follows:
\begin{itemize}
\item $\id:\mathbb S_3\to\GL(\R)$, $\id(\gamma)=1$ for all $\gamma$. 

\item $\rho_1:\mathbb S_3\to\GL(\R)$, $\rho_1(\gamma)=\op{sgn}(\gamma)$, the sign of the permutation $\gamma$. 

\item $\rho_2:\mathbb S_3\to\GL(V)$, where $V=\{(x_1,x_2,x_3)\in\R^3: x_1+x_2+x_3=0\}$,  $\rho_2(\gamma)\cdot (x_1,x_2,x_3)= (x_{\gamma(1)},x_{\gamma(2)},x_{\gamma(3)})$. 
\end{itemize}
Since $\id$ and $\rho_1$ have dimension one and $\rho_2$ has dimension two, there are non-negative integers $m_1',m_2'$ such that 
\begin{equation}\label{eq:pi}
\pi=(d+1-m_1'-2m_2')\,\id + m_1'\,\rho_1 + m_2'\,\rho_2. 
\end{equation}
Note that $m_2'>0$ since otherwise $\pi$ is not faithful.

We write 
$
\mathbb S_3
=\langle \sigma,\tau\mid  \sigma^3=e,\, \tau^2=e,\, \tau\sigma\tau=\sigma^{-1} \rangle 
=\{e,\sigma,\sigma^2,\tau,\tau\sigma,\tau\sigma^2\}
.
$
One clearly has $\rho_1(\sigma^j)=1$ and $\rho_1(\tau \sigma^j)=-1$ for $j=0,1,2$.
Furthermore, $\rho_2(\sigma)$ and $\rho_2(\sigma^2)$ are rotations in $V$ by $2\pi/3$ and $-2\pi/3$, respectively, and  $\rho_2(\tau),\rho_2(\tau\sigma),\rho_2(\tau\sigma^2)$ are reflections. 
Therefore, 
$\rho_2(\sigma)$ and $\rho_2(\sigma^2)$ have eigenvalues $\xi_3$ and $\xi_3^2$,
$\rho_2(\tau)$, $\rho_2(\tau\sigma)$, and $\rho_2(\tau\sigma^2)$ have eigenvalues $+1,-1$, 
and $\rho_2(e)$ has the eigenvalue $1$ with multiplicity $2$. 
Combining this information and \eqref{eq:pi}, we obtain the following:
\begin{itemize}
\item $\pi(e)$ has the eigenvalue $1$ with multiplicity $d+1$,

\item $\pi(\sigma)$ and $\pi(\sigma^2)$ have eigenvalues $\xi_3,\xi_3^2$ with multiplicity $m_2'$, and $1$ with multiplicity $d+1-2m_2'$,

\item $\pi(\tau\sigma^j)$ has eigenvalues $-1$ with multiplicity $m_1'+m_2'$ and $1$ with multiplicity $d+1-m_1'-m_2'$, for any $j=0,1,2$. 
\end{itemize}
Hence
\begin{equation}\label{eq6:simetrico}
\begin{aligned}
\frac{6}{1-z^2}F_{\Gamma}(z)&
= \frac{1}{(1-z)^{d+1}}
+\frac{3}{(1-z)^{d+1-m_1'-m_2'} (1+z)^{m_1'+m_2'}}
\\ & \quad
+\frac{2}{(1-z)^{d+1-2m_2'} (1-z\xi_3)^{m_2'} (1-z\xi_3^2)^{m_2'} }
.
\end{aligned}
\end{equation}

We observe from \eqref{eq6:ciclico} and \eqref{eq6:simetrico} that, for an arbitrary subgroup $\Gamma$ of $\Ot(d+1)$ having six elements, $F_{\Gamma}(z)$ has a pole at $z=\xi_6$ if and only if $\Gamma$ is cyclic. 
Consequently, $F_{\Gamma}(z)$ determines whether $\Gamma\simeq\Z/6\Z$ or $\Gamma\simeq \mathbb S_3$. 

We first suppose that $\Gamma$ is cyclic. 
It follows immediately from \eqref{eq6:ciclico} that $\frac{6}{1-z^2} F_{\Gamma}(z)$ has the following: 
a pole at $z=\xi_6$ of order $m_1$, 
a pole at $z=\xi_6^2$ of order $m_1+m_2$, 
a pole at $z=-1$ of order $2m_1+m_3$.
This forces $m_1,m_2,m_3$ to be determined by $F_{\Gamma}(z)$, consequently $\Gamma$ is also determined up to conjugation in $\Ot(d+1)$. 

We now consider the case $\Gamma\simeq\mathbb S_3$, that is, $z=\xi_6$ is not a pole of $F_{\Gamma}(z)$. 
It is clear from \eqref{eq6:simetrico} that $F_{\Gamma}(z)$ has a pole at $z=\xi_3$ of order $m_2'$. 
Thus, $F_{\Gamma}(z)$ determines the function 
\begin{multline*}
\frac{6}{1-z^2}F_{\Gamma}(z)
- \frac{1}{(1-z)^{d+1}}
-
\frac{2}{(z-\xi_3)^{m_2'} (z-\xi_3^2)^{m_2'} (1-z)^{d+1-2m_2'}}
\\ 
=\frac{3}{(1+z)^{m_1'+m_2'}(1-z)^{d+1-m_1'-m_2'}}
,
\end{multline*}
which has a pole at $z=-1$ of order $m_1'+m_2'$.
Hence $m_1'$ and $m_2'$ are both determined by $F_{\Gamma}(z)$. 
By \eqref{eq:pi}, the representation $\pi$ is determined and consequently also is $\Gamma$ up to conjugation in $\Ot(d+1)$.

\medskip\noindent$\bullet$
$|\Gamma|=7$. 
Let $\gamma_0$ be any non-trivial element in $\Gamma$. 
In this case, it is adequate to use the language of Subsection~\ref{subsec:lens}. 
Since $\gamma_0$ has order odd, then $\gamma_0\in\SO(d+1)$, and consequently $S^{d}/\Gamma$ is isometric to a lens orbifold.
Therefore, there are non-negative integers $n_0,n_1,n_2,n_3$ satisfying $n=n_0+\dots+n_3$ such that $S^d/\Gamma$ is isometric to 
$$
L:=L_d(7;0^{n_0},1^{n_1},2^{n_2},3^{n_3}).
$$

It follows from \eqref{eq:MolienLens} that 
\begin{multline*}
\widetilde F_{\Gamma}(z):=
\frac{7}{1-z^2}F_{\Gamma}(z) 
-\frac{1}{(1-z)^{d+1}} 
\\
= 
 \frac{2}{(1-z)^{d+1-2(n_1+n_2+n_3)} 
	(1-z\xi_7)^{n_1}   (1-z\xi_7^2)^{n_2}
	(1-z\xi_7^3)^{n_3} (1-z\xi_7^4)^{n_3}
	(1-z\xi_7^5)^{n_2} (1-z\xi_7^6)^{n_1} }
\\
+ \frac{2}{(1-z)^{d+1-2(n_1+n_2+n_3)} 
	(1-z\xi_7)^{n_3}   (1-z\xi_7^2)^{n_1}
	(1-z\xi_7^3)^{n_2} (1-z\xi_7^4)^{n_2}
	(1-z\xi_7^5)^{n_1} (1-z\xi_7^6)^{n_3} }
\\
+ \frac{2}{(1-z)^{d+1-2(n_1+n_2+n_3)} 
	(1-z\xi_7)^{n_2}   (1-z\xi_7^2)^{n_3}
	(1-z\xi_7^3)^{n_1} (1-z\xi_7^4)^{n_1}
	(1-z\xi_7^5)^{n_3} (1-z\xi_7^6)^{n_2} }
. 
\end{multline*}
By using the same argument as in \eqref{eq5:degreepole} for the case $|\Gamma|=5$, one can show that this function, which is determined by $F_{\Gamma}(z)$, has a pole of order $d+1-2(n_1+n_2+n_3)$ at $z=1$. 
Hence, $F_{\Gamma}(z)$ determines $n_1+n_2+n_3$; therefore so is $n_0$.

With a computation analogous to \eqref{eq:verification}, one can see that  $(1-z)^{d+1-2(n_1+n_2+n_3)}F_{\Gamma}(z)$, which is determined by $F_{\Gamma}(z)$, coincides with the generating function associated to the $(2n_1+2n_2+2n_3-1)$-dimensional lens space $L(7;1^{n_1},2^{n_2},3^{n_3})$. 
Proposition~\ref{prop:varphi(q)<=6} yields that $n_1,n_2,n_3$ are determined by $F_{\Gamma}(z)$.
We conclude that $F_{\Gamma}(z)$ determines the spectrum of $\gamma_0$, hence $\Gamma$ is determined up to isometry in $\Ot(d+1)$ by $F_{\Gamma}(z)$, as asserted. 
\end{proof}

\section{Highest volume of spherical space forms with non-cyclic fundamental groups}\label{sec:errata}

Lemma~4.4 in \cite{LauretLinowitz} classifies the only possible non-cyclic fundamental groups of spherical space forms with order strictly less than 24, namely,
\begin{itemize}
\item the quaternion group $Q_8:=\langle B,R\mid B^4=e,\, R^2=B^2,\, RBR^{-1}=B^3\rangle$ of order $8$,

\item the group $P_{12}:=\langle A,B\mid A^3=B^4=e,\, BAB^{-1}=A^2\rangle$ of order $12$,

\item the generalized quaternion group $Q_{16}:=\langle B,R\mid B^8=e,\, R^2=B^4,\, RBR^{-1}=B^7\rangle$ of order $16$,

\item the group $P_{20}:=\langle A,B\mid A^5=B^4=e,\, BAB^{-1}=A^4\rangle$ of order $20$. 
\end{itemize}
Furthermore, it claims that 
\begin{quote}
\it for each group $H$ in the above list and $m\in\N$, there is up to isometry exactly one spherical space form with fundamental group isomorphic to $H$ and dimension $4m-1$.
\end{quote}
This last part is incorrect and the goal of this section is to correct it. 
The next statement replaces it. 

\begin{lemma}\label{lem:errata}
Let $H$ be a group in the above list and $m\in\N$.  Two isospectral $(4m-1)$-dimensional spherical space forms with fundamental groups isomorphic to $H$ are isometric. 
\end{lemma}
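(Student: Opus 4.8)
The plan is to show that for each of the four groups $H$ in the list, the generating function $F_{\Gamma}(z)$ determines the faithful $(4m-1)$-dimensional orthogonal representation $\pi$ with $\pi(H)=\Gamma$ up to conjugation in $\Ot(4m)$, mirroring the strategy used in the proof of Theorem~\ref{thm:orbifolds<8} for the cyclic and $\mathbb{S}_3$ cases. The essential point is that a spherical space form requires $\Gamma$ to act \emph{freely} on $S^{4m-1}$, which by Remark after \eqref{eq:Molien} means $1\notin\spec(\gamma)$ for every $\gamma\neq\Id$; this fixed-point-free condition severely restricts the admissible representations, so the multiplicity data encoded in $F_{\Gamma}(z)$ will pin down $\pi$ uniquely. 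I would handle the four groups one at a time, treating each exactly as the $\mathbb{S}_3$ case was treated: decompose $\pi$ into irreducibles, record for each conjugacy class the eigenvalue multiplicities, substitute into \eqref{eq:Molien}, and read off spectral invariants from the poles of $F_{\Gamma}(z)$ at the various roots of unity.

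The first step is to list, for each $H$, its irreducible complex representations and, crucially, to determine which \emph{sums} of irreducibles yield a genuine fixed-point-free real representation of the correct degree $4m$. For the quaternion group $Q_8$, the only faithful irreducible is the $2$-dimensional one, whose underlying real representation has degree $4$; freeness forces $\pi$ to be a multiple of this single quaternionic irreducible plus possibly the trivial one, and the non-trivial real $1$-dimensional characters cannot appear without introducing fixed points (a sign character sends some $\gamma\neq\Id$ to $+1$ on a nonzero subspace). One then expects that $F_{\Gamma}(z)$ is determined by a single multiplicity parameter, whence isospectral such forms are automatically isometric. The generalized quaternion group $Q_{16}$, and the metacyclic groups $P_{12}$ and $P_{20}$ are handled by the same bookkeeping: write $\pi$ as an integer combination of faithful higher-dimensional irreducibles (subject to freeness), and show the combination is forced by the orders of poles of $F_{\Gamma}(z)$ at the relevant primitive roots of unity $\xi_k$ for $k\in\orders(\Gamma)$.

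The key mechanism at each stage is identical to the proof of Theorem~\ref{thm:orbifolds<8}: by Proposition~\ref{prop:specinvariantsIkeda} the orders $\orders(\Gamma)$, the functions $F_{\Gamma}^{(k)}(z)$, and the maximal multiplicity of $\xi_k$ are spectral invariants, and one extracts the multiplicity parameters successively by examining the order of the pole of $F_{\Gamma}(z)$ at each $\xi_k$, peeling off the highest-order terms and recursing on the remainder. Since, under the freeness constraint, only a bounded number of nonnegative integer parameters (one or two per group, constrained by $\sum=$ degree $=4m$) describe $\pi$, and each is read off from a distinct pole order or a distinct Laurent coefficient, the representation $\pi$ and hence $\Gamma$ up to conjugacy is determined. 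A uniform remark that isospectrality implies $\orders(\Gamma)=\orders(\Gamma')$ and forces the same abstract group $H$ (distinguished by which roots of unity appear as poles) lets me conclude that two isospectral such forms share all parameters and are therefore isometric.

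The main obstacle I anticipate is the case analysis for $P_{12}$ and $P_{20}$, whose character tables contain $2$-dimensional faithful irreducibles in which a generator acts with eigenvalues that are primitive roots of different orders simultaneously (e.g. the element $A$ of order $3$ or $5$ paired with $B$ of order $4$). Verifying freeness for an arbitrary integer combination of irreducibles, and checking that the resulting pole orders genuinely separate the parameters rather than producing coincidental cancellations, requires care; in particular one must confirm that the element of order $2$ (the unique central involution $B^2$ in the quaternionic cases) has full-rank eigenvalue $-1$, so that $-\Id\in\Gamma$ and the $z=-1$ pole has the expected order $4m$, and that no parameter disappears from the invariants. This is the step where the correction to the earlier erroneous claim really bites: the old statement asserted \emph{uniqueness} of the representation for each $H$, whereas in fact several non-isometric (but not isospectral) forms can exist, so I must be careful to prove only that \emph{isospectral} ones coincide, using the full strength of the pole-order invariants rather than a naive uniqueness argument.
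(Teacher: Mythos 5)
Your overall mechanism (pole orders of $F_{\Gamma}(z)$ and of the invariants $F_{\Gamma}^{(k)}(z)$ at roots of unity, read off and peeled successively) is the same one the paper uses for the hard cases, but your proposal has a genuine gap exactly where you claim that ``each [parameter] is read off from a distinct pole order'' so that ``two isospectral such forms share all parameters.'' For $H=P_{20}$ (and likewise $Q_{16}$) this is false. The fixed-point-free representations of degree $4m$ are, up to equivalence, $\rho_h\oplus\bar\rho_h$ with $\rho_h=h\,\pi_{1,1}\oplus(m-h)\,\pi_{2,1}$, $0\le h\le m$, and the spectrum \emph{cannot} distinguish $h$ from $m-h$: the relevant pole of $F_{\Gamma}^{(10)}(z)$ at $z=\xi_{10}$ has order $\max\bigl(2h,2(m-h)\bigr)$, so the spectral data determines only the unordered pair $\{h,m-h\}$, not the individual multiplicities. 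Consequently your argument, as written, can only conclude that isospectral forms have parameters agreeing up to this swap, and to finish you need the additional, purely representation-theoretic ingredient that the paper takes from \cite{Wolf01}: there is $\Phi\in\Aut(H)$ with $\pi_{1,1}\simeq\pi_{2,1}\circ\Phi$, whence $\rho_h\simeq\rho_{m-h}\circ\Phi$ and the two corresponding subgroups of $\Ot(4m)$ are conjugate, i.e.\ the $h$ and $m-h$ forms are \emph{isometric}. With that identification the isometry classes are indexed by $h=0,\dots,\lfloor m/2\rfloor$ as in \eqref{eq:S^4m-1/rho_h(H)}, and only then does the pole order $2(m-h)$ separate them. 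You half-anticipate this danger (``coincidental cancellations''), but you treat it as a verification expected to succeed, when in fact the cancellation genuinely occurs and is resolved not by spectral analysis but by the isometry criterion (isometric $\Leftrightarrow$ representations agree up to an automorphism of $H$), which appears nowhere in your outline.

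Two smaller points. First, for $Q_8$ you allow $\pi$ to contain ``possibly the trivial'' representation; it cannot: any trivial summand gives every non-identity element the eigenvalue $1$, contradicting freeness, so $\pi$ is forced to be $m$ copies of the real form of the unique $2$-dimensional irreducible (this is why $Q_8$ and $P_{12}$ are immediate: there is exactly one form per dimension, no spectral argument needed). Second, you propose to rederive by hand which sums of irreducibles are fixed point free; this is feasible for these four groups but is substantial bookkeeping that the paper avoids by citing the classification of irreducible complex fixed-point-free representations in \cite[Prop.~4.19]{Wolf01} (one class for $Q_8$ and $P_{12}$, two classes for $P_{20}$ and $Q_{16}$). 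That citation, together with the automorphism argument above, is precisely what reduces the problem to the pole-order computation you describe.
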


Only the last paragraph in the proof of \cite[Lem.~4.4]{LauretLinowitz} is incorrect. 
It obtains the false fact in the previous statement from \cite[Prop.~5.3]{Wolf01}. 
We next fix it modifying only the erroneous parts.  

\begin{proof}
The ways that a fixed point free group $H$ embeds into $\SO(2n)$ via complex fixed point free representations to give all the isometry classes of $(2n-1)$-dimensional spherical space forms is quite complicated to be explained. 
See \cite[\S4--5]{Wolf01} for a nice summary, or \cite[\S7.2--3]{Wolf-book} for their classifications. 
The main point is the following: 
\begin{quote}
given two $d$-dimensional spherical space forms $S^{d}/\rho_1(\Gamma)$ and $S^{d}/\rho_2(\Gamma)$ ($\Gamma$ is a fixed point group and $\rho_1,\rho_2:H\to\SO(d+1)$ are fixed point free representations), they are isometric if and only if there is an automorphism $\Phi$ of $\Gamma$ such that $\rho_1\cong\rho_2\circ\Phi$ (i.e., $\rho_1$ and $\rho_2\circ\Phi$ are equivalent as representations of $\Gamma$). 
\end{quote}

We begin with the second group in the list, $H= P_{12}$, which is of Type I with parameters $m=3$, $n=4$, $r=2$, and $d=2$ in the notation in \cite{Wolf01}. 
By \cite[Prop.~4.19]{Wolf01}, there is only one irreducible complex fixed point free representation of $H$ up to equivalence, say $\pi$, which has dimension $2$.
This implies that, up to isometry, there is only one $(4m-1)$-dimensional spherical space form with fundamental group isomorphic to $H$, namely
\begin{equation}
S^{4m-1}/\big(\underbrace{(\pi\oplus\bar\pi)\oplus \dots \oplus (\pi\oplus\bar\pi) }_{m\text{-times}}\big)(H). 
\end{equation}
Of course, they are mutually non-isospectral because they have different dimensions. 

The case $H=Q_8$ follows in a very similar way since it has up to equivalence only one irreducible complex fixed point free representation of dimension $2$ by \cite[Prop.~4.19]{Wolf01}. 
Note that $Q_{8}$ is of Type II with parameters $m=1$, $n=4$, $r=1$, $d=1$, $s=1$, and $t=3$, according to the notation in \cite{Wolf01}. 

We now assume $H=P_{20}$. 
We have that $H$ is Type I, with parameters $m=5$, $n=4$, $r=4$, and $d=2$. 
In this case, \cite[Prop.~4.19]{Wolf01} yields there are two irreducible complex fixed point free representations of $H$ up to equivalence, which have dimension $2$.
A representative set is given by $\{\pi_{1,1}, \pi_{2,1}\}$, following the notation in \cite[(4.1)]{Wolf01}. 

The $3$-dimensional spherical space forms 
$S^{3}/(\pi_{1,1}\oplus\bar\pi_{1,1}) (H)$ and $S^{3}/(\pi_{2,1}\oplus\bar\pi_{2,1}) (H)$ are isometric by Proposition~\cite[Prop.~5.3]{Wolf01} since there is $\Phi\in\op{Aut}(H)$ such that $\pi_{1,1}\simeq\pi_{2,1}\circ\Phi$. 
By writing $\rho_h=h\pi_{1,1}\oplus(m-h)\pi_{2,1}$, we have that $\rho_h\simeq \rho_{m-h}\circ\Phi$ for integers $0\leq h\leq m$, 
hence 
$
S^{4m-1}/\big(\rho_h\oplus\bar\rho_h\big)(H)
$
and 
$
S^{4m-1}/\big(\rho_{m-h}\oplus\bar\rho_{m-h}\big)(H)
$
are isometric. 
Consequently, the isometry classes of all $(4m-1)$-dimensional spherical space forms with fundamental groups isomorphic to $H$ are 
\begin{equation}\label{eq:S^4m-1/rho_h(H)}
S^{4m-1}/(\rho_h\oplus\bar\rho_h)(H)
\qquad\text{for }h=0,\dots,\lfloor\tfrac{m}{2}\rfloor. 
\end{equation}
It remains to show that they are pairwise non-isospectral. 
To do that, we next prove that $F_{(\rho_h\oplus\bar\rho_h)(H)}^{(10)}(z)$ determines $h$. 

We have the presentation
\begin{equation}
H=\langle A,B\mid A^5=B^4=e,\; BAB^{-1}=A^4\rangle.
\end{equation}
The elements in $H$ with order $10$ are $(AB^2)^j$ with $\gcd(j,10)=1$. 
One obtains from \cite[(4.1)]{Wolf01} that 
\begin{align}
\pi_{k,1}(A)&=
\begin{pmatrix}
\xi_5^{k}&\\ & \xi_5^{-k}
\end{pmatrix}
,&
\pi_{k,1}(B^2)=
\begin{pmatrix}
-1&\\ & -1
\end{pmatrix}
;
\end{align}
thus the multiset of eigenvalues of the $4\times4$-matrix $(\pi_{k,1}\oplus \bar\pi_{k,1})\big((AB^2)^j\big)$ is  \begin{equation}
\{\!\{\xi_{10}^{(2k+5)j},\xi_{10}^{(2k+5)j}, \xi_{10}^{-(2k+5)j}, \xi_{10}^{-(2k+5)j}\}\!\}
. 
\end{equation}
Hence
\begin{equation}
\begin{aligned}
F_{(\rho_h\oplus\bar\rho_h)(H)}^{(10)}(z)&
= \sum_{j\in\{1,3,7,9\}} \frac{1}{ 
	(z-\xi_{10}^{3j})^{2h} 
	(z-\xi_{10}^{-3j})^{2h} 
	(z-\xi_{10}^{j})^{2(m-h)}
	(z-\xi_{10}^{-j})^{2(m-h)}
}
\\ & 
= \frac{2}{ 
	(z-\xi_{10}^{3})^{2h} 
	(z-\xi_{10}^{-3})^{2h} 
	(z-\xi_{10})^{2(m-h)}
	(z-\xi_{10}^{-1})^{2(m-h)}
}
\\&\quad 
+
\frac{2}{ 
	(z-\xi_{10})^{2h} 
	(z-\xi_{10}^{-1})^{2h} 
	(z-\xi_{10}^{3})^{2(m-h)}
	(z-\xi_{10}^{-3})^{2(m-h)}
}
,
\end{aligned}
\end{equation}
which has a pole of order $2(m-h)$ at $z=\xi_{10}$. 
We conclude that $F_{(\rho_h\oplus\bar\rho_h)(H)}(z)$ determines $h$, as asserted.

We now assume $H=Q_{16}$. 
The proof is very similar to the one for $P_{20}$, so we omit several details. 
The group $H$ is of Type II with parameters $m=1$, $n=8$, $r=1$, $d=1$, $s=1$, and $t=7$, according to \cite{Wolf01}. 
The complex fixed point free representations $\alpha_{1,1}$ and $\alpha_{1,3}$ (see \cite[(4.2)]{Wolf01}) of $H$ are irreducible, non-equivalent, and have dimension $2$.
The isometry classes of $(4m-1)$-dimensional spherical space forms with fundamental groups isomorphic to $H$ are as in \eqref{eq:S^4m-1/rho_h(H)} with $\rho_h=h\alpha_{1,1}\oplus(m-h)\alpha_{1,3}$. 
A simple computation gives 
\begin{equation}
\begin{aligned}
F_{(\rho_h\oplus\bar\rho_h)(H)}^{(8)}(z)&
= \frac{2}{ 
	(z-\xi_{8})^{2h} 
	(z-\xi_{8}^{-1})^{2h} 
	(z-\xi_{8}^3)^{2(m-h)}
	(z-\xi_{8}^{-3})^{2(m-h)}
}
\\&\quad 
+
\frac{2}{ 
	(z-\xi_{8}^{3})^{2h} 
	(z-\xi_{8}^{-3})^{2h} 
	(z-\xi_{8})^{2(m-h)}
	(z-\xi_{8}^{-1})^{2(m-h)}
}
,
\end{aligned}
\end{equation}
which has a pole of order $2(m-h)$ at $z=\xi_8$. 
This proves that $F_{(\rho_h\oplus\bar\rho_h)(H)}(z)$ determines $h$, and the proof is complete. 
\end{proof}

In \cite{LauretLinowitz}, Lemma~4.4, (essentially) used in the third line of page 701, combined with Prop.~4.3, imply that 
\begin{quote}
any pair of odd-dimensional isospectral and non-isometric spherical space forms $S^{2n-1}/\Gamma_1,S^{2n-1}/\Gamma_2$ with $|\Gamma_i|<24$ are necessarily lens spaces.
\end{quote}
This fact still follows from the new statement in Lemma~\ref{lem:errata}, so the correction does not affect any other result in \cite{LauretLinowitz}.

We end the article with another correction to \cite{LauretLinowitz}. 

\begin{remark}
The proof of Theorem~4.6 in \cite{LauretLinowitz} is not correct. 
The second identity in formula (38) is invalid since $\Phi_q(z)=\prod_{j=1}^{q_0} (z-\xi_q^{ht_j}) (z-\xi_q^{-ht_j})$ holds only if $\gcd(h,q)=1$. 

In order to solve it, one has to check that $F_L^{(k)}(z)=F_{L'}^{(k)}(z)$ for every divisor $k$ of $q$, instead of $F_L(z)=F_{L'}(z)$. 
Let $\gamma_0$ be a generator of the fundamental group $\Gamma$ of $L$. 
Clearly, for any divisor $k$ of $q$, the elements in $\Gamma$ with order $k$ are $\gamma_0^{h}$ with $0\leq h<q$ and $\gcd(h,q)=\frac{q}{k}$. 
Hence
\begin{equation}\label{eq:F_L(q;s+r.t)}
\begin{aligned}
F_{L}^{(k)}(z)&
= \frac{1-z^2}{q} \sum_{\substack{ 0\leq h\leq q-1,\\  \gcd(h,q)=\frac{q}{k}} } \frac{1}{ \prod_{i=1}^n (z-\xi_q^{hs_i})(z-\xi_q^{-hs_i})} \left(\frac{1}{\prod_{j=1}^{q_0} (z-\xi_q^{ht_j}) (z-\xi_q^{-ht_j})}\right)^r 
\\ &
= \frac{1-z^2}{q} \sum_{\substack{ 0\leq \ell\leq k-1,\\  \gcd(\ell,k)=1} } 
	\frac{1}{ \prod_{i=1}^n (z-\xi_q^{\frac{\ell q}{k} s_i})(z-\xi_q^{-\frac{\ell q}{k}s_i})} \left(\frac{1}{\prod_{j=1}^{q_0} (z-\xi_q^{\frac{\ell q}{k}t_j}) (z-\xi_q^{-\frac{\ell q}{k}t_j})}\right)^r 
\\ &
= \frac{1-z^2}{q} \sum_{\substack{ 0\leq \ell\leq k-1,\\  \gcd(\ell,k)=1} } 
	\frac{1}{ \prod_{i=1}^n (z-\xi_k^{\ell s_i}) (z-\xi_k^{-\ell s_i})} \left(\frac{1}{\prod_{j=1}^{q_0} (z-\xi_k^{\ell t_j}) (z-\xi_k^{-\ell t_j})}\right)^r 
\\ &
= \frac{1-z^2}{q} \frac{1}{\Phi_k(z)^{\frac{2q_0r}{\varphi(k)}}}
\sum_{\substack{ 0\leq \ell\leq k-1,\\  \gcd(\ell,k)=1} } 
	\frac{1}{ \prod_{i=1}^n (z-\xi_k^{\ell s_i}) (z-\xi_k^{-\ell s_i})}
\\ &
=  \frac{1}{\Phi_k(z)^{\frac{2q_0r}{\varphi(k)}}}
F_{L(q;s)}^{(k)}(z)
.
\end{aligned}
\end{equation}
Now, using that $F_{L(q;s)}^{(k)}(z)= F_{L(q;s')}^{(k)}(z)$ because $L(q;s)$ and $L(q;s')$ are isospectral by hypothesis, and returning over the steps in \eqref{eq:F_L(q;s+r.t)} for $s'$, we obtain $F_L^{(k)}(z)=F_{L'}^{(k)}(z)$ as required.
\hfill\qedsymbol 
\end{remark}

\bibliographystyle{plain}

\begin{thebibliography}{RSW08}
    
    \bibitem[BH20]{ShamsHunsicker17}
    {\sc N.S. Bari, E. Hunsicker}.
    {\it Isospectrality for orbifolds lens spaces.}
    Canad. J. Math. \textbf{72}:2 (2020), 281--325.
    DOI: \href{https://doi.org/10.4153/S0008414X19000178} {10.4153/S0008414X19000178}.
    
    \bibitem[Be14]{BelolipetskyICM}
    {\sc M. Belolipetsky}.
    {\it Hyperbolic orbifolds of small volume.}
    In \textsl{Proceedings of the International Congress of Mathematicians} (ICM 2014), Seoul, Korea, August 13--21, 2014, Vol.~II. 
    Seoul: KM Kyung Moon Sa, 2014.
    
    \bibitem[BT87]{BrooksTse87}
    {\sc R. Brooks, R. Tse}.
    {\it Isospectral surfaces of small genus.}
    Nagoya Math. J. \textbf{107} (1987), 13--24.
    DOI: \href{https://doi.org/10.1017/S0027763000002518} {10.1017/S0027763000002518}.    
    
    
    \bibitem[Bu]{Buser-book}
    {\sc P. Buser}.
    Geometry and spectra of compact Riemann surfaces.
    {\it Progr. Math.} \textbf{106}.
    Birkh\"auser Boston, 1992.
    DOI: \href{https://doi.org/10.1007/978-0-8176-4992-0} {10.1007/978-0-8176-4992-0}
    
    \bibitem[CS]{ConwaySloane-book}
    {\sc J.H. Conway, N.J.A. Sloane}.
    Sphere packings, lattices and groups.
    Third edition. \textit{Grundlehren Math. Wiss.}, Springer-Verlag, New York, 1999.
    DOI: \href{http://dx.doi.org/10.1007/978-1-4757-6568-7} {10.1007/978-1-4757-6568-7}.
    
    \bibitem[Go12]{Gordon12-orbifold}
    {\sc C. Gordon}.
    {\it Orbifolds and their spectra}.
    In \textsl{Spectral Geometry}, 49--71,
    Proc. Sympos. Pure Math. \textbf{84}, Amer. Math. Soc., Providence, RI, 2012.
    DOI: \href{http://dx.doi.org/10.1090/pspum/084} {10.1090/pspum/084}.
    
    
    \bibitem[Ik80a]{Ikeda80_3-dimI}
    {\sc A. Ikeda}.
    {\it On the spectrum of a riemannian manifold of positive constant curvature.}
    Osaka J. Math. \textbf{17} (1980), 75--93.
    DOI: \href{http://dx.doi.org/10.18910/5171} {10.18910/5171}
    
    \bibitem[Ik80b]{Ikeda80_3-dimII}
    {\sc A. Ikeda}.
    {\it On the spectrum of a riemannian manifold of positive constant curvature II.}
    Osaka J. Math. \textbf{17} (1980), 691--762.
    DOI: \href{http://dx.doi.org/10.18910/10475} {10.18910/10475}
    
    \bibitem[Ik80c]{Ikeda80_isosp-lens}
    {\sc A. Ikeda}.
    {\it On lens spaces which are isospectral but not isometric}.
    Ann. Sci. \'Ecole Norm. Sup. (4) \textbf{13}:3 (1980), 303--315.
    DOI: \href{http://dx.doi.org/10.24033/asens.1384} {10.24033/asens.1384}.	
    
    \bibitem[IY79]{IkedaYamamoto79}
    {\sc A. Ikeda, Y. Yamamoto}.
    {\it On the spectra of 3-dimensional lens spaces}.
    Osaka J. Math. \textbf{16}:2 (1979), 447--469.
    DOI: \href{http://dx.doi.org/10.18910/4811} {10.18910/4811}.
    
    \bibitem[La16]{Lauret-spec0cyclic}
    {\sc E.A. Lauret}.
    {\it Spectra of orbifolds with cyclic fundamental groups}.
    Ann. Global Anal. Geom. \textbf{50}:1 (2016), 1--28.
    DOI: \href{http://dx.doi.org/10.1007/s10455-016-9498-0} {10.1007/s10455-016-9498-0}.
       
    \bibitem[LL24]{LauretLinowitz}
    {\sc E.A. Lauret, B. Linowitz}.
    {\it The spectral geometry of hyperbolic and spherical manifolds: analogies and open problems.}
    New York J. Math. \textbf{30} (2024), 682--721.
    \href{https://arxiv.org/abs/2305.10950}{arXiv:2305.10950}, \href{https://nyjm.albany.edu/j/2024/30-31.html}{current journal link}.
    
    \bibitem[LMR21]{LMR-SaoPaulo}
    {\sc E.A. Lauret, R.J. Miatello, J.P. Rossetti}.
    {\it Recent results on the spectra of lens spaces.}
    S\~ao Paulo J. Math. Sci. \textbf{15}:1 (2021), 240--267.
    DOI: \href{http://dx.doi.org/10.1007/s40863-019-00154-3} {10.1007/s40863-019-00154-3}.
    
    \bibitem[LV15]{LinowitzVoight15}
    {\sc B. {Linowitz}, J. {Voight}}.
    {\it {Small isospectral and nonisometric orbifolds of dimension 2 and 3.}}
    Math. Z. \textbf{281}:1--2 (2015), 523--569.
    DOI: \href{http://dx.doi.org/10.1007/s00209-015-1500-1} {10.1007/s00209-015-1500-1}.
    
    \bibitem[Mi64]{Milnor64}
    {\sc J. Milnor}.
    {\it Eigenvalues of the Laplace operator on certain manifolds.}
    Proc. Natl. Acad. Sci. USA \textbf{51}:4 (1964), 542.
    DOI: \href{http://dx.doi.org/10.1073/pnas.51.4.542}{10.1073/pnas.51.4.542}.
    
    \bibitem[NRR23]{RowlettNilssonRydell-flattori}
    {\sc E. Nilsson, J. Rowlett, F. Rydell}.
    {\it The isospectral problem for flat tori from three perspectives.}
    Bull. Amer. Math. Soc. (N.S.) \textbf{60} (2023), 39--83.
    DOI: \href{http://dx.doi.org/10.1090/bull/1770} {10.1090/bull/1770}.
    
    \bibitem[Pe95]{Pesce95}
    {\sc H. Pesce}.
    {\it Vari\'et\'es hyperboliques et elliptiques fortement isospectrales.}
    J. Funct. Anal. \textbf{134}:2 (1995), 363--391.
    DOI: \href{http://dx.doi.org/10.1006/jfan.1995.1150} {10.1006/jfan.1995.1150}.
    
    \bibitem[RSW08]{RossettiSchuethWeilandt08}
    {\sc J.P. Rossetti, D. Schueth, M. Weilandt}.
    {\it Isospectral orbifolds with different maximal isotropy orders}.
    Ann. Global Anal. Geom. \textbf{34}:4 (2008), 351--366.
    DOI: \href{http://dx.doi.org/10.1007/s10455-008-9110-3} {10.1007/s10455-008-9110-3}.
    
    \bibitem[Sc90]{Schiemann90}
    {\sc A. Schiemann}.
    {\it Ein {B}eispiel positiv definiter quadratischer {F}ormen der {D}imension {$4$} mit gleichen {D}arstellungszahlen}
    Arch. Math. (Basel) \textbf{54}:4 (1990), 372--375.
    DOI: \href{http://dx.doi.org/10.1007/BF01189584} {10.1007/BF01189584}.
    
    \bibitem[Sc99]{Schiemann97}
    {\sc A. Schiemann}.
    {\it Ternary positive definite quadratic forms are determined by their theta series}
    Math. Ann. \textbf{308}:3 (1997), 507--517.
    DOI: \href{http://dx.doi.org/10.1007/s002080050086} {10.1007/s002080050086}.
    
    \bibitem[Su85]{Sunada85}
    {\sc T. Sunada}.
    {\it Riemannian coverings and isospectral manifolds.}
    Ann. of Math. (2) \textbf{121}:1 (1985), 169--186.
    DOI: \href{http://dx.doi.org/10.2307/1971195} {10.2307/1971195}.
    
    \bibitem[Vá18]{Vasquez18}
    {\sc J.J. V\'{a}squez}.
    {\it Isospectral nearly {K}\"{a}hler manifolds.}
    Abh. Math. Semin. Univ. Hambg. \textbf{88}:1 (2018), 23--50.
    DOI: \href{http://dx.doi.org/10.1007/s12188-017-0185-2} {10.1007/s12188-017-0185-2}.
    
    \bibitem[Vi80]{Vigneras80}
    {\sc M. Vign{\'e}ras}.
    {\it Vari\'et\'es riemanniennes isospectrales et non isom\'etriques.}
    Ann. of Math. (2) \textbf{112}:1 (1980), 21--32.
    DOI: \href{http://dx.doi.org/10.2307/1971319} {10.2307/1971319}.
    
    \bibitem[Wo]{Wolf-book}
    {\sc J. Wolf}.
    Spaces of constant curvature. 6th ed.
    Providence, RI: AMS Chelsea Publishing, 2011.
    
    \bibitem[Wo01]{Wolf01}
    {\sc J. Wolf}.
    {\it Isospectrality for spherical space forms.}
    Result. Math. \textbf{40} (2001), 321--338.
    DOI: \href{http://dx.doi.org/10.1007/BF03322715} {10.1007/BF03322715}.

\end{thebibliography}

\end{document}